\newcommand{\R}{{\mathbb R}}
\newcommand{\T}{{\mathbb T}}
\newcommand{\ft}{{\mathcal{F}}}
\newcommand{\wt}{\widetilde}
\newcommand{\wh}{\widehat}
\newtheorem{theorem}{Theorem}[section]
\newtheorem{lemma}[theorem]{Lemma}
\newtheorem{corollary}[theorem]{Corollary}
\newtheorem{proposition}[theorem]{Proposition}
\theoremstyle{definition}
\theoremstyle{remark}
\newtheorem*{remarks}{Remarks}
\newcommand{\remove}[1]{ }
\def\R{\mathbb R}
\def\be{\begin{equation}}
\def\ee{\end{equation}}
\def\ba{\begin{eqnarray}}
\def\ea{\end{eqnarray}}
\numberwithin{equation}{section}
\begin{document}
\title[Controllability with an integral condition]{Control of Kawahara equation with overdetermination condition: The unbounded cases}
\author[Capistrano--Filho]{Roberto de A. Capistrano--Filho*}
\thanks{*Corresponding author: roberto.capistranofilho@ufpe.br}
\address{Departamento de Matemática, Universidade Federal de Pernambuco (UFPE), 50740-545, Recife-PE, Brazil.}
\email{roberto.capistranofilho@ufpe.br}
\author[de Sousa]{Luan S. de Sousa}
\address{Departamento de Matemática, Universidade Federal de Pernambuco (UFPE), 50740-545, Recife-PE, Brazil.}
\email{luan.soares@ufpe.br}
\author[Gallego]{Fernando A. Gallego}
\address{Departamento de Matem\'a1tica, Universidad Nacional de Colombia (UNAL), Cra 27 No. 64-60, 170003, Manizales, Colombia}
\email{fagallegor@unal.edu.co}
\subjclass[2020]{Primary: 35G31, 35Q53, 93B05  Secondary: 37K10, 49N45}
\keywords{Internal controllability, integral overdetermination condition, higher order KdV type, unbounded domains}

\begin{abstract}
In this manuscript we consider the internal control problem for the fifth order KdV type equation, commonly called the Kawahara equation, on unbounded domains. Precisely, under certain hypotheses over the initial and boundary data, we are able to prove that there exists an internal control input such that solutions of the Kawahara equation satisfies an \textit{integral overdetermination condition}. This condition is satisfied when the domain of the Kawahara equation is posed in the real line, left half-line and right half-line. Moreover, we are also able to prove that there exists a minimal time in which the  integral overdetermination condition is satisfied.  Finally,  we show a type of exact controllability associated with the ``mass" of the Kawahara equation posed in the half-line. 
\end{abstract}
\maketitle

\section{Introduction}\label{Sec0} 
\subsection{Model under consideration} Water wave systems are too complex to easily derive and rigorously from it relevant qualitative information on the dynamics of the waves. Alternatively, under suitable assumption on amplitude, wavelength, wave steepness and so on, the study on asymptotic models for water waves has been extensively investigated to understand the full water wave system, see, for instance, \cite{ASL,BCS, BCL, BLS, Lannes, Saut} and references therein for a rigorous justification of various asymptotic models for surface and internal waves.

Formulating the waves as a free boundary problem of the incompressible, irrotational Euler equation in an appropriate non-dimensional form, one has two non-dimensional parameters $\delta := \frac{h}{\lambda}$ and $\varepsilon := \frac{a}{h}$, where the water depth, the wavelength and the amplitude of the free surface are parameterized as $h, \lambda$ and $a$, respectively. Moreover, another non-dimensional parameter $\mu$ is called the Bond number, which measures the importance of gravitational forces compared to surface tension forces. The physical condition $\delta \ll 1$ characterizes the waves, which are called long waves or shallow water waves, but there are several long wave approximations according to relations between $\varepsilon$ and $\delta$. 

In this spirit, when we consider  $\varepsilon = \delta^2 \ll 1$ and $\mu \neq \frac13$, we are dealing with the Korteweg-de Vries (KdV) equation. Under this regime, Korteweg and de Vries \cite{Korteweg}\footnote{This equation was firstly introduced by Boussinesq \cite{Boussinesq}, and Korteweg and de Vries rediscovered it twenty years later. Details can be found in \cite{CaZh1} and the reference therein.} derived the following equation well-known as a central equation among other dispersive or shallow water wave models called the KdV equation
\[\pm2 u_t + 3uu_x +\left( \frac13 - \mu\right)u_{xxx} = 0.\] 

Another alternative is to treat a new formulation, that is, when $\varepsilon = \delta^4 \ll 1$ and $\mu = \frac13 + \nu\varepsilon^{\frac12}$, and in connection with the  critical Bond number $\mu = \frac13$, to generate the so-called equation Kawahara equation. That equation was derived by Hasimoto and Kawahara \cite{Hasimoto1970,Kawahara} as a fifth-order KdV equation and take the form \[\pm2 u_t + 3uu_x - \nu u_{xxx} +\frac{1}{45}u_{xxxxx} = 0.\]

Our main focus is to investigate a type of controllability for the higher-order KdV type equation. We will continue working with an \textit{integral overdetermination condition} started in \cite{CaSo} however in another framework, to be precise, on an \textit{unbounded domain}. To do that,  consider the initial boundary value problem (IBVP)
\begin{equation}\label{IBVP}
\left\lbrace
\begin{array}{llr}
u_{t} + \alpha u_{x} + \beta u_{xxx} + \xi u_{xxxxx} + uu_{x}= f_{0}(t)g(t,x) & \mbox{in} \ [0,T]\times\mathbb{R}^{+}, \\ 
u(t,0)=h_1(t), \ u_{x}(t,0)= h_2(t) & \mbox{on} \ [0,T], \\
 u(0,x) = u_{0}(x) & \mbox{in} \ \mathbb{R}^{+},
\end{array}\right. 
\end{equation} 
where $\alpha$,  $\beta$ and $\xi$ are real number, $u= u(t,x)$, $g=g(x,t)$ and $h_i=h_i(t)$, for $i=1,2$, are well-known function and $f_{0}=f_0(t)$ is a control input.  It is important mention that \eqref{IBVP} is called KdV and Kawahara equation when $\xi=0$ and $\xi=-1$, respectively. 

\subsection{Framework of the problems} In this work we will be interested with a kind of a internal control property to the Kawahara equation when an \textit{integral overdetermination condition},  on unbounded domain, is required, namely
\begin{equation}\label{cond. de con.2}
\int_{\mathbb{R}^{+}}^{}u(t,x)\omega(x)dx= \varphi(t), \ t \in [0,T],
\end{equation}
where $\omega$ and $\varphi$ are some known functions. To present the problems under consideration, take the following unbounded domain
 $Q^+_T = (0, T) \times \mathbb{R}^+$, where $T$ is a positive number, consider the  boundary functions $\mu$ and $\nu$, and a source term $f=f(t,x)$ with a special form, to be specify latter. Thus, let us deal with the following system 
\begin{equation}\label{2}
\left\lbrace
\begin{array}{llr}
u_{t} + \alpha u_{x} + \beta u_{xxx} - u_{xxxxx} + uu_{x}= f(t,x)& \mbox{in} \ [0,T]\times\mathbb{R}^{+}, \\ 
u(t,0)=\mu(t), \ u_{x}(t,0)= \nu(t) & \mbox{on} \ [0,T], \\
 u(0,x) = u_{0}(x) & \mbox{in} \ \mathbb{R}^{+},
\end{array}\right. 
\end{equation} 

Therefore, the goal of the article is concentrated on proving an \textit{overdetermination control problem}. Precisely, we want to prove that if $f$ take the following special form \begin{equation}\label{f} f(t,x)=f_0(t)g(t,x), \quad (t,x)\in Q^+_T, \end{equation} the solution of \eqref{2} satisfies the integral overdetermination condition \eqref{cond. de con.2}. In other words, we have the following issue.

\vspace{0.2cm}
\noindent\textbf{Problem $\mathcal{A}$:} For given functions $u_0$, $\mu$, $\nu$ and $g$ in some appropriated spaces, can we find an internal control $f_0$ such that the solution associated to the equation \eqref{2} satisfies the integral condition \eqref{cond. de con.2}?
\vspace{0.2cm}

Naturally, another point to be considered is the following one.

\vspace{0.2cm}
\noindent\textbf{Problem $\mathcal{B}$:}  What assumptions are needed to ensure that the solution $u$ of \eqref{2} is unique and verifies \eqref{cond. de con.2} for a unique $f_0$?
\vspace{0.2cm}

Finally, with these results in hand, the last problem of this article is related with the existence of a minimal time for which the integral overdetermination condition \eqref{cond. de con.2} be satisfied. Precisely, the problem can be seen as follows.

\vspace{0.2cm}
\noindent\textbf{Problem $\mathcal{C}$:}  Can on find a time $T_0>0$, depending of the boundary and initial data, such that if $T \leq T_0$, there exist a function $f_0$, in appropriated space, in that way that the solution $u$ of \eqref{2} verifies \eqref{cond. de con.2}?
\vspace{0.2cm}

In summary, the main goal of this manuscript is to prove that these problems are indeed true. There are basically some features to be emphasized.
\begin{itemize}
\item The integral overdetermination condition is effective and gives good control properties. This kind of condition was first applied in the inverse problem (see e.g. \cite{PriOrVas}) and, more recently, in control theory \cite{CaSo,Fa1,FA}.
\vspace{0.1cm}
\item One should be capable of controlling the system, when the control acts in $[0,T]$, on an unbounded domain, which is new for the Kawahara equation.
\vspace{0.1cm}
\item We are also able to prove the existence of a minimal $T>0$ such that the overdetermination condition is still verified, however, we believe that this time is not optimal.
\end{itemize}

\subsection{Main results} In this paper we are able to present answers to the problems $\mathcal{A}$ and $\mathcal{B}$ that were firstly proposed in \cite{CaGo}. Additionally, the results of this work extend the results presented in \cite{CaGo} for a new framework for Kawahara equation, that is: The real line, right half-line and left half-line.  For sake of simplicity, we will present here the \textit{overdetermination control problem} in the right half-line, for details of the results for the real line and left half-line we invite the reader to read Section \ref{Sec4} at the end of this article.

In this way, the first result ensures that the \textit{overdetermination control problem}, that is, the internal control problem with an integral condition like \eqref{cond. de con.2} on unbounded domain follows for small data, giving answers for the Problem $\mathcal{A}$ and $\mathcal{B}$.

\begin{theorem}\label{main1} Let $T > 0$ and $p \in [2,\infty]$. Consider $\mu \in H^{\frac{2}{5}}(0,T) \cap L^{p}(0,T)$, $\nu \in H^{\frac{1}{5}}(0,T) \cap L^{p}(0,T)$, $u_{0} \in L^{2}(\mathbb{R}^{+})$ and $\varphi \in W^{1,p}(0,T)$. Additionally, let  $g \in C(0,T; L^{2}(\mathbb{R}^{+}))$ and  $\omega$ be a fixed function which belongs to the following set 
\begin{equation}\label{jota}
\mathcal{J}= \{\omega \in H^{5}(\mathbb{R}^+): \omega(0)= \omega'(0)= \omega''(0)=0 \},
\end{equation}
 satisfying
$$
\varphi(0)= \int_{\mathbb{R}^{+}}^{}u_{0}(x)\omega(x)dx
$$
and
$$
\biggl|\int_{\mathbb{R}^{+}}^{}g(t,x)\omega(x)dx\biggr| \geq g_{0} > 0, \ \forall t \in [0,T],
$$
where $g_{0}$ is a constant. Then, for each $T > 0$ fixed, there exists a constant $\gamma > 0$ such that if 
$$c_{1}= \|u_{0}\|_{L^{2}(\mathbb{R}^{+})} + \|\mu\|_{H^{\frac{2}{5}}(0,T)} +  \|\nu\|_{H^{\frac{1}{5}}(0,T)} + \|\varphi'\|_{L^{2}(0,T)} \leq \gamma,$$ we can find a unique control input $f_{0} \in L^{p}(0,T)$ and a unique solution $u$ of  \eqref{2} satisfying  \eqref{cond. de con.2}.
\end{theorem}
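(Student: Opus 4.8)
The plan is to turn the overdetermination control problem into a fixed-point problem for the pair (control, solution), the key observation being that the constraint $\int_{\R^+}u(t,x)\omega(x)\,dx=\varphi(t)$ forces an explicit formula for $f_0(t)$. First I would assume a solution $u$ of \eqref{2} satisfying \eqref{cond. de con.2} exists, multiply the equation by $\omega$, integrate over $\R^+$, and differentiate \eqref{cond. de con.2} in $t$. Integrating by parts, the conditions $\omega(0)=\omega'(0)=\omega''(0)=0$ defining $\mathcal J$ in \eqref{jota} annihilate exactly the boundary contributions at $x=0$ generated by the first, third and fifth order terms through $u(t,0)$, $u_x(t,0)$, $u_{xx}(t,0)$, $u_{xxx}(t,0)$, while the fifth-order term still leaves $\nu(t)\omega'''(0)-\mu(t)\omega^{(4)}(0)$. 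One is thus led to
\[
\begin{aligned}
f_0(t)\int_{\R^+} g\omega\,dx &= \varphi'(t) - \al\int_{\R^+}u\omega'\,dx - \beta\int_{\R^+}u\omega'''\,dx + \int_{\R^+}u\omega^{(5)}\,dx\\
&\quad - \tfrac12\int_{\R^+}u^2\omega'\,dx - \nu(t)\omega'''(0)+\mu(t)\omega^{(4)}(0).
\end{aligned}
\]
Because $|\int_{\R^+}g\omega\,dx|\ge g_0>0$, this can be solved for $f_0(t)$, defining a map $u\mapsto f_0[u]$ that lands in $L^p(0,T)$: the term $\varphi'$ is in $L^p$ since $\varphi\in W^{1,p}$, the boundary terms are in $L^p$ since $\mu,\nu\in L^p$, and the remaining linear and quadratic functionals of $u$ are continuous in $t$ because $u\in C([0,T];L^2)$ while $\omega',\omega''',\omega^{(5)}\in L^2$ and $\omega'\in L^\infty$.

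Second, I would run the iteration in the solution space $X_T$ provided by the linear well-posedness theory for the half-line Kawahara problem recalled earlier. Given $v$ in a closed ball $B_R\subset X_T$, set $f_0[v]$ by the formula above and let $\Phi(v)=u$ solve the linear IBVP
\[
u_t+\al u_x+\beta u_{xxx}-u_{xxxxx}=f_0[v](t)\,g(t,x)- v v_x,
\]
with $u(t,0)=\mu(t)$, $u_x(t,0)=\nu(t)$, $u(0,\cdot)=u_0$. The linear estimates bound $\|\Phi(v)\|_{X_T}$ by $c_1$ plus $\|f_0[v]\,g\|_{L^1(0,T;L^2)}$ plus the bilinear contribution of $vv_x$ in the appropriate retarded norm; since $\|f_0[v]\|_{L^p}\lesssim c_1+\|v\|_{X_T}^2$ and $\sup_{[0,T]}\|g(t)\|_{L^2}$ is fixed, while $L^p(0,T)\hookrightarrow L^1(0,T)$ on the bounded interval, one obtains $\|\Phi(v)\|_{X_T}\lesssim c_1+\|v\|_{X_T}^2$. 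Choosing $R\sim c_1$ and imposing $c_1\le\gamma$ small then makes $\Phi$ carry $B_R$ into itself.

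Third, and this is where smallness of $c_1$ is indispensable, I would prove $\Phi$ is a contraction on $B_R$. The difference $\Phi(v_1)-\Phi(v_2)$ solves the linear equation with zero data and forcing $(f_0[v_1]-f_0[v_2])g-(v_1v_{1,x}-v_2v_{2,x})$; the bilinear estimate gives $\|v_1v_{1,x}-v_2v_{2,x}\|\lesssim(\|v_1\|_{X_T}+\|v_2\|_{X_T})\|v_1-v_2\|_{X_T}$, and $f_0[\cdot]$ is locally Lipschitz since its only nonlinear dependence on $v$ is the quadratic functional $\int_{\R^+}v^2\omega'\,dx$, whence $\|f_0[v_1]-f_0[v_2]\|_{L^p}\lesssim(1+\|v_1\|_{X_T}+\|v_2\|_{X_T})\|v_1-v_2\|_{X_T}$. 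Taking $R$ (hence $\gamma$) small forces the contraction constant below $1$, so Banach's theorem yields a unique fixed point $u\in B_R$ together with the unique control $f_0=f_0[u]\in L^p(0,T)$. Finally, multiplying the equation satisfied at the fixed point by $\omega$ and integrating by parts exactly as in the first step, then substituting the definition of $f_0[u]$, every term cancels except $\varphi'$, giving $\tfrac{d}{dt}\int_{\R^+}u\omega\,dx=\varphi'(t)$; integrating in time and using the compatibility condition $\varphi(0)=\int_{\R^+}u_0\omega\,dx$ recovers \eqref{cond. de con.2}. The principal obstacle is closing the two nonlinear mechanisms simultaneously, namely the bilinear Kawahara estimate in $X_T$ and the Lipschitz bound on $f_0[\cdot]$, under a single smallness threshold $\gamma$, which is precisely why the statement is confined to small data $c_1\le\gamma$.
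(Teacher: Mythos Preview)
Your approach has a genuine gap in both the self-mapping and the contraction steps. The explicit formula you write for $f_0[v]$ contains \emph{linear} functionals of $v$ (the terms $\int_{\R^+}v\,\omega'\,dx$, $\int_{\R^+}v\,\omega'''\,dx$, $\int_{\R^+}v\,\omega^{(5)}\,dx$), not only the quadratic one. Hence the correct bound is $\|f_0[v]\|_{L^p}\lesssim c_1+\|v\|_{X_T}+\|v\|_{X_T}^2$, not $c_1+\|v\|_{X_T}^2$ as you claim; and, as you yourself record in the Lipschitz step,
\[
\|f_0[v_1]-f_0[v_2]\|_{L^p}\ \lesssim\ (1+\|v_1\|_{X_T}+\|v_2\|_{X_T})\,\|v_1-v_2\|_{X_T},
\]
where the ``$1$'' comes from those linear terms. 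Feeding this into the linear solution estimate gives a Lipschitz constant for $\Phi$ of the form $C\bigl(1+O(R)\bigr)$, with $C$ built from the linear well-posedness constant, $\|g\|_{C([0,T];L^2)}$, $g_0^{-1}$ and $\|\omega\|_{H^5}$. This number does \emph{not} shrink when $R$ (equivalently $c_1$) shrinks; for fixed $T$ it is a fixed constant that may well exceed $1$, so the sentence ``Taking $R$ small forces the contraction constant below $1$'' is not justified. The same obstruction prevents $\Phi(B_R)\subset B_R$.

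The paper avoids this precisely by \emph{not} substituting $v$ directly into the constraint formula. It first constructs, via a separate fixed-point argument (Lemma~\ref{lss}), a bounded linear operator $\Gamma:\widetilde W^{1,p}(0,T)\to L^p(0,T)$ which to any target $\psi$ assigns the control $f_0$ such that $S(0,0,0,f_0g)$ satisfies $\int u\,\omega=\psi$. That inner fixed point closes because the map $f_0\mapsto S(0,0,0,f_0g)$ has Volterra structure (the energy identity gives $\|u(t)\|_{L^2}\le 2\|g\|_{C}\int_0^t|f_0|$), so an exponential weight $e^{-\gamma t}$ with large $\gamma$ makes it a contraction \emph{regardless} of the size of the structural constants. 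With $\Gamma$ in hand, the outer map is
\[
\Theta v \;=\; S\Bigl(u_0,\mu,\nu,\ \Gamma\bigl(\varphi-Q(S(u_0,\mu,\nu,-vv_x))\bigr)\,g - vv_x\Bigr),
\]
and since $\Gamma$, $Q$, $S$ are all linear, the difference $\Theta v_1-\Theta v_2$ depends on $v_1,v_2$ only through the bilinear expression $v_1v_{1,x}-v_2v_{2,x}$. This yields a Lipschitz constant of order $R$, which \emph{does} go to zero with the data. The two-stage decomposition is exactly what allows smallness of the data, rather than smallness of $T$ or of the structural constants, to close the argument.
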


Our second result gives us a small time interval for which the integral overdetermination condition \eqref{cond. de con.2} follows for solutions of \eqref{2}. Precisely, the answer for the Problem $\mathcal{C}$ can be read as follows. 

\begin{theorem}\label{main2} Suppose the hypothesis of Theorem \ref{main1} be satisfied and consider $\delta:=T^{\frac{1}{5}}\in (0,1)$, for $T>0$. Then there exists $T_{0}:=\delta_0^{\frac{1}{5}}>0$, depending on $c_1=c_1(\delta)$ given by
$$c_1(\delta):=\|u_{0\delta}\|_{L^{2}(\mathbb{R}^{+})} +  \|\varphi'_{\delta}\|_{L^{2}(0,T)} + \|\mu_{\delta}\|_{H^{\frac{2}{5}}(0,T)} +  \|\nu_{\delta}\|_{H^{\frac{1}{5}}(0,T)},$$
such that if  $T \leq T_0$, there exist a control function $f_{0} \in L^{p}(0, T)$ and a solution $u$ of \eqref{2} verifying \eqref{cond. de con.2}.
\end{theorem}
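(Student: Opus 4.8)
The plan is to remove the smallness hypothesis $c_1\le\gamma$ of Theorem \ref{main1} by trading data-smallness for time-smallness, exploiting the scaling invariance of the leading (fifth-order and Burgers) part of \eqref{2}. Fix $T>0$, set $\delta=T^{1/5}$, and to any solution $u$ of \eqref{2} on $Q^+_T$ I associate
\[
v(s,y)=\delta^{4}\,u(\delta^{5}s,\delta y),\qquad (s,y)\in(0,1)\times\mathbb{R}^+ .
\]
A direct computation shows that $v$ solves a Kawahara-type system on the fixed cylinder $(0,1)\times\mathbb{R}^+$,
\[
\partial_s v+\alpha\delta^{4}\partial_y v+\beta\delta^{2}\partial_y^{3}v-\partial_y^{5}v+v\,\partial_y v=f_{0\delta}(s)g_\delta(s,y),
\]
in which the fifth-order term and the nonlinearity are invariant while the transport and dispersion coefficients are damped to $\alpha\delta^{4}$ and $\beta\delta^{2}$. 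The data transform accordingly: $u_{0\delta}(y)=\delta^{4}u_0(\delta y)$, $\mu_\delta(s)=\delta^{4}\mu(\delta^{5}s)$, $\nu_\delta(s)=\delta^{5}\nu(\delta^{5}s)$, $g_\delta(s,y)=g(\delta^{5}s,\delta y)$ and $f_{0\delta}(s)=\delta^{9}f_0(\delta^{5}s)$, so the source keeps the product form \eqref{f}.

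For the overdetermination condition I take the weight $\omega_\delta(y)=\omega(\delta y)$ and $\varphi_\delta(s)=\delta^{3}\varphi(\delta^{5}s)$; a change of variables gives $\int_{\mathbb{R}^+}v(s,y)\omega_\delta(y)\,dy=\delta^{3}\int_{\mathbb{R}^+}u(\delta^{5}s,x)\omega(x)\,dx=\varphi_\delta(s)$, so \eqref{cond. de con.2} is equivalent to its rescaled version on $(0,1)$. I then check that the scaled data satisfy the hypotheses of Theorem \ref{main1} with $T=1$: since $\omega\in\mathcal{J}$ one has $\omega_\delta(0)=\omega_\delta'(0)=\omega_\delta''(0)=0$ and $\omega_\delta\in H^{5}(\mathbb{R}^+)$, hence $\omega_\delta\in\mathcal{J}$; the compatibility $\varphi_\delta(0)=\int_{\mathbb{R}^+}u_{0\delta}\omega_\delta$ follows from the corresponding identity for $u_0$; and the nondegeneracy persists, $\bigl|\int_{\mathbb{R}^+}g_\delta(s,y)\omega_\delta(y)\,dy\bigr|=\delta^{-1}\bigl|\int_{\mathbb{R}^+}g(\delta^{5}s,x)\omega(x)\,dx\bigr|\ge \delta^{-1}g_0>0$.

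The heart of the argument is how the smallness quantity behaves. Tracking each change of variables—using for the fractional norms the homogeneity $[\phi(\delta^{5}\cdot)]_{H^{\sigma}(0,1)}=\delta^{5(\sigma-1/2)}[\phi]_{H^{\sigma}(0,T)}$ of the Gagliardo seminorm—I find, for $\delta\in(0,1)$,
\[
\|u_{0\delta}\|_{L^{2}}=\delta^{7/2}\|u_0\|_{L^{2}},\qquad \|\varphi_\delta'\|_{L^{2}}=\delta^{11/2}\|\varphi'\|_{L^{2}},
\]
\[
\|\mu_\delta\|_{H^{2/5}}\le\delta^{3/2}\|\mu\|_{H^{2/5}},\qquad \|\nu_\delta\|_{H^{1/5}}\le\delta^{5/2}\|\nu\|_{H^{1/5}},
\]
whence $c_1(\delta)\le C\,\delta^{3/2}c_1\to0$ as $\delta\to0^+$, i.e.\ as $T\to0^+$. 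Therefore there is $\delta_0>0$, and a corresponding threshold $T_0$, so that $c_1(\delta)\le\gamma$ whenever $T\le T_0$, where $\gamma$ is the smallness constant delivered by Theorem \ref{main1} on the unit interval. For such $T$, Theorem \ref{main1} applied to the rescaled system furnishes a control $f_{0\delta}\in L^{p}(0,1)$ and a solution $v$ meeting the rescaled condition; undoing the scaling returns $f_0\in L^{p}(0,T)$ and $u$ solving \eqref{2} and satisfying \eqref{cond. de con.2}.

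The main obstacle I anticipate is twofold. First, $\gamma$ in Theorem \ref{main1} must be taken uniform as the coefficients degenerate to $\alpha\delta^{4},\beta\delta^{2}$; this is harmless because they remain bounded by $|\alpha|,|\beta|$ for $\delta\in(0,1)$ and the underlying linear estimates are governed by the invariant fifth-order term, so $\gamma$ can be chosen independent of $\delta$. Second, the fractional-norm scaling on the bounded time interval must be justified carefully, as it is exactly what produces the positive powers of $\delta$ in every term of $c_1(\delta)$ and hence the decay $c_1(\delta)\to0$ that drives the whole proof.
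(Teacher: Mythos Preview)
Your scaling strategy is exactly the paper's: pass to the unit time interval via $u_\delta(t,x)=\delta^4 u(\delta^5 t,\delta x)$, show that the rescaled smallness quantity $c_1(\delta)$ tends to zero with $\delta$, and then close by a contraction argument on $(0,1)$. (Your exponents for $\nu_\delta$, $\varphi_\delta$, $f_{0\delta}$ differ from those printed in the paper; yours are the ones consistent with the change of variables, and the discrepancy is harmless since only the product $f_{0\delta}g_\delta$ and the integral $\int u_\delta\omega_\delta$ matter.)

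The genuine gap is precisely the point you flag as the ``main obstacle'' and then dismiss too quickly: the uniformity in $\delta$ of the smallness threshold $\gamma$ delivered by Theorem~\ref{main1}. In the proof of Theorem~\ref{main1} the constant $C$---and hence $\gamma=1/(8C^2(T^{1/2}+1))$---depends not only on $|\alpha|,|\beta|$ but also on $\|\omega\|_{H^5(\mathbb{R}^+)}$, on $\|g\|_{C([0,T];L^2)}$, and, crucially, on the operator norm of $\Gamma$ in Lemma~\ref{lss}, which is obtained there via the Banach open-mapping theorem and is therefore \emph{not explicit}. Under your rescaling $\|\omega_\delta\|_{L^2}=\delta^{-1/2}\|\omega\|_{L^2}\to\infty$, and with your choice $g_\delta(s,y)=g(\delta^5 s,\delta y)$ also $\|g_\delta\|_{C([0,1];L^2)}=\delta^{-1/2}\|g\|_{C}\to\infty$. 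Saying that $\alpha\delta^4,\beta\delta^2$ stay bounded does not control these contributions, so a black-box appeal to Theorem~\ref{main1} on $(0,1)$ does not give a $\delta$-independent $\gamma$.

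The paper closes this gap by \emph{not} invoking Theorem~\ref{main1} directly. It uses the refinement Lemma~\ref{lss1}, which gives the explicit estimate $\|\Gamma\varphi\|_{L^p}\le (2/g_0)\|\varphi'\|_{L^p}$ provided $c_0T\le p^{1/p}/2$, with $c_0$ the structural constant \eqref{c_0}. For the rescaled problem on $(0,1)$ one computes $c_{0\delta}\le\delta^5 c_0$ (the gains from $\alpha\delta^4$, $\beta\delta^2$ and from the derivative scalings $\|\omega_\delta^{(k)}\|_{L^2}=\delta^{k-1/2}\|\omega^{(k)}\|_{L^2}$ exactly compensate the losses in $\|g_\delta\|$ and $g_{0,\delta}$), so the hypothesis of Lemma~\ref{lss1} holds once $\delta\le\delta_0:=(2c_0)^{-1/5}$. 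With this explicit $\Gamma_\delta$-bound in hand, the paper reruns the contraction estimates \eqref{Q}--\eqref{Q-Q} for $\Theta_\delta$ and obtains a constant $C$ uniform for $0<\delta\le\delta_0$; only then does $c_1(\delta)\to0$ close the argument. To repair your proof you must either track all these constants explicitly or invoke Lemma~\ref{lss1}; the claim that ``$\gamma$ can be chosen independent of $\delta$ because the coefficients remain bounded'' is insufficient as written.
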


From the previous results, we are able to give a consequence related to the controllability of the following system 
\begin{equation}\label{2_1}
\left\lbrace
\begin{array}{llr}
u_{t} + \alpha u_{x} + \beta u_{xxx} - u_{xxxxx} + uu_{x}= f_0(t)g(t,x)& \mbox{in} \ [0,T]\times\mathbb{R}^{+}, \\ 
u(t,0)= u_{x}(t,0)= 0 & \mbox{on} \ [0,T], \\
 u(0,x) = u_{0}(x) & \mbox{in} \ \mathbb{R}^{+},
\end{array}\right. 
\end{equation} 
posed in the right half-line.  Precisely,  we present a control property involving the overdetermination condition \eqref{cond. de con.2} and the initial state $u_0$ and final state $u_T$.  To do that, consider the following notation
\begin{equation}\label{mass}
[u(x,t)]=\int_{\R^+} u(x,t)d\eta,
\end{equation}
which one will be called of mass,  for some $\sigma$-finite measure  $\eta$  in $\R^+$.  With this in hand,  as a consequence of Theorem \ref{main1}, the following exact controllability in the right half-line holds true.
\begin{corollary}\label{teocontrol}
Let $T > 0$ and $p \in [2,\infty]$. Consider  $u_{0}, \ u_T  \in L^{2}(\mathbb{R}^{+})$ and  $g \in C(0,T; L^{2}(\mathbb{R}^{+}))$, 
 satisfying
\begin{equation}\label{damping}
\biggl|\int_{\mathbb{R}^{+}}^{}g(t,x)dx\biggr| \geq g_{0} > 0, \ \forall t \in [0,T],
\end{equation}
where $g_{0}$ is a constant.  Additionally,  consider $\omega$ be a fixed function which belongs to the following set \eqref{jota}
and $\varphi \in W^{1,p}(0,T)$  satisfying
\begin{equation} \label{eqq1}
\varphi(0)= \int_{\R^+} u_0(x)\omega(x)dx \quad \text{and} \quad \varphi(T)= \int_{\R^+} u_T(x)\omega(x)dx. 
\end{equation}  Then, for each $T > 0$ fixed, there exists a constant $\gamma > 0$ such that if 
$$ \|u_{0}\|_{L^{2}(\mathbb{R}^{+})} +  \|\varphi'\|_{L^{2}(0,T)} \leq \gamma,$$ we can find a unique control input $f_{0} \in L^{p}(0,T)$, a unique solution $u$ of  \eqref{2_1} and a $\sigma$-finite measure $\eta$ in $\R^+$ such that 
\begin{equation}\label{exat}
[u(T)]=[u_T].
\end{equation}
\end{corollary}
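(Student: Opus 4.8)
The plan is to derive Corollary~\ref{teocontrol} from Theorem~\ref{main1} by interpreting the integral overdetermination condition \eqref{cond. de con.2}, evaluated at the final time $t=T$, as the mass identity \eqref{exat} for a suitably chosen measure. The key observation is that \eqref{2_1} is precisely \eqref{2} with homogeneous boundary data $\mu\equiv\nu\equiv 0$; consequently the quantity $c_1$ appearing in Theorem~\ref{main1} collapses to $\|u_0\|_{L^2(\R^+)}+\|\varphi'\|_{L^2(0,T)}$, which is exactly the quantity constrained by the smallness hypothesis of the corollary. Hence the threshold $\gamma$ supplied by Theorem~\ref{main1} may be taken as the $\gamma$ in the statement, and no new smallness analysis is needed.

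Next I would verify that the remaining hypotheses of Theorem~\ref{main1} hold. The regularity assumptions $u_0\in L^2(\R^+)$, $g\in C(0,T;L^2(\R^+))$ and $\varphi\in W^{1,p}(0,T)$ are assumed, and the compatibility requirement $\varphi(0)=\int_{\R^+}u_0\omega\,dx$ is precisely the first relation in \eqref{eqq1}. The delicate point, and the step I expect to be the main obstacle, is the nondegeneracy condition $\bigl|\int_{\R^+}g(t,x)\omega(x)\,dx\bigr|\ge g_0'>0$ that Theorem~\ref{main1} requires against the weight $\omega$, whereas the corollary postulates only the unweighted bound \eqref{damping}. This is where the unbounded geometry is felt: a function of $\mathcal J$ must vanish to second order at the origin and decay at infinity, so it cannot be the constant $1$, and \eqref{damping} does not transfer to the weighted integral automatically. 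I would therefore read \eqref{damping} as the hypothesis that permits selecting an admissible weight $\omega\in\mathcal J$ with $\omega\ge 0$ adapted to $g$, exploiting the continuity of $t\mapsto g(t,\cdot)$ in $L^2(\R^+)$ (hence the compactness of its range) to keep $t\mapsto\int_{\R^+}g\,\omega\,dx$ bounded away from zero uniformly on $[0,T]$; producing such an $\omega$ is the technical heart of the reduction.

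Having secured the hypotheses, I would apply Theorem~\ref{main1} to obtain a unique control $f_0\in L^p(0,T)$ and a unique solution $u$ of \eqref{2_1} satisfying $\int_{\R^+}u(t,x)\omega(x)\,dx=\varphi(t)$ for all $t\in[0,T]$. I then define the measure $\eta$ on $\R^+$ through its density, $d\eta=\omega(x)\,dx$; since $\omega\ge 0$ and $\omega\in H^5(\R^+)\subset L^\infty(\R^+)$, this $\eta$ is a nonnegative $\sigma$-finite measure, and for any $v\in L^2(\R^+)$ the pairing $[v]=\int_{\R^+}v\,d\eta=\int_{\R^+}v\,\omega\,dx$ is finite by Cauchy--Schwarz because $\omega\in L^2(\R^+)$. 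Evaluating the overdetermination identity at $t=T$ and invoking the second relation in \eqref{eqq1} yields
\[
[u(T)]=\int_{\R^+}u(T,x)\omega(x)\,dx=\varphi(T)=\int_{\R^+}u_T(x)\omega(x)\,dx=[u_T],
\]
which is exactly \eqref{exat}; the uniqueness of $f_0$ and of $u$ is inherited verbatim from Theorem~\ref{main1}, completing the argument.
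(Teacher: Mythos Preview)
Your approach---apply Theorem~\ref{main1} with $\mu=\nu=0$, define the measure by $d\eta=\omega\,dx$, and evaluate the overdetermination identity at $t=T$ using the second relation in \eqref{eqq1}---is exactly what the paper does, in four lines.

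The discrepancy you flag between the unweighted \eqref{damping} and the weighted nondegeneracy required by Theorem~\ref{main1} is genuine, but the paper simply does not address it: its proof invokes Theorem~\ref{main1} directly, so \eqref{damping} is almost certainly a typo and should already contain the weight $\omega$. Your proposed workaround---\emph{selecting} an $\omega\in\mathcal J$ adapted to $g$---does not work as written, because in the corollary $\omega$ is given as fixed data and is already tied to $\varphi$ through \eqref{eqq1}; you are not free to choose it after the fact. So that step should be dropped and \eqref{damping} read as the weighted hypothesis. (A related loose end: neither $\mathcal J$ nor the corollary forces $\omega\ge 0$, so $d\eta=\omega\,dx$ is in general only a signed measure; the paper makes the same definition without comment, so you are no worse off than the original, but your justification of nonnegativity rests on an assumption not in the hypotheses.)
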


\subsection{Historical background}  Is well know that Cauchy problem and control theory for the Kawahara equation 
\begin{equation}\label{int}
u_{t} + \alpha u_{x} + \beta u_{xxx} - u_{xxxxx} + uu_{x}=0
\end{equation}
has been studied by several mathematicians in recent years in differents framework: bounded domain of $\mathbb{R}$, on real line $\mathbb{R}$, on the torus $\mathbb{T}$, right half-line $\mathbb{R}^+$ and left half-line $\mathbb{R}^-$.

With respect to the well-posedness of the Kawahara equation, the first local result is due to Cui and Tao \cite{CuiTao}. The authors proved a Strichartz estimate for the fifth-order operator and obtained the local well-posedness in $H^{s}(\mathbb{R})$, for  $s>1 / 4$. After that,  Cui \textit{et al.} \cite{CuiDeTao} improved the previous result to the negative regularity Sobolev space $H^{s}(\mathbb{R}), s>-1$. Is important to point out that Wang \textit{et al.} \cite{WaCuiDen} improved to a lower regularity, in this case, $s \geq-7 / 5 .$  These papers treated the problem using Fourier restriction norm method.  In \cite{ChenLiMiaoWu} and \cite{JiaHuo}, authors showed the local well-posedness in $H^{s}(\mathbb{R}), s>-7 / 4$, while their methods are same, particularly, the Fourier restriction norm method in addition to Tao's $[K ; Z]$-multiplier norm method. At the critical regularity Sobolev space $H^{-7 / 4}(\mathbb{R})$, Chen and Guo \cite{ChenGuo} proved local and global well-posedness by using Besov-type critical space and I-method. Kato \cite{Kato} studied local wellposedness for $s \geq-2$ by modifying $X^{s, b}$ space and the ill-posedness for $s<-2$ in the sense that the flow map is discontinuous.  

Finally, still regarding the well-posedness results, we refer to two recent works that treat the Kawahara equation. Recently, Cavalcante and Kwak \cite{MC} studied the IBVP of the Kawahara equation posed on the right and left half-lines with the nonlinearity as in \eqref{int}. Being precise, they proved the local well-posedness in the low regularity Sobolev space, that is, $s \in\left(-\frac{7}{4}, \frac{5}{2}\right) \backslash\left\{\frac{1}{2}, \frac{3}{2}\right\}$. Additionally, the authors in \cite{MC1} extended the argument of \cite{MC} to fifth-order KdV-type equations with different nonlinearities, in specific, where the scaling argument does not hold. They are established in some range of $s$ where the local well-posedness of the IBVP fifth-order KdV-type equations on the right half-line and the left half-line holds true.

Stabilization and control problems (see \cite{zhang1,CaKawahara} for details of these kinds of issues) has been studied in recent years for the Kawahara Equation, however with few results in the literature. A first work concerning to the stabilization property for the Kawahara equation in a bounded domain $Q_T = (0, T) \times (0,L)$,
\begin{equation}\label{int1}
\left\lbrace
\begin{array}{llr}
u_{t} + u_{x} +u_{xxx} - u_{xxxxx}+uu_x= f(t,x) & \mbox{in} \ Q_{T}, \\ 
u(t,0)=h_{1}(t), \ u(t,L)= h_{2}(t), \ u_{x}(t,0)= h_{3}(t)& \mbox{on} \ [0,T], \\
u_{x}(t,L)=h_{4}(t), \ u_{xx}(t,L) = h(t) & \mbox{on} \ [0,T],\\
 u(0,x) = u_{0}(x) & \mbox{in} \ [0,L], 
\end{array}\right. 
\end{equation} 
is due to Capistrano-Filho \textit{et al.} in \cite{CaKawahara}. In this paper the authors were able to introduce an internal feedback law in \eqref{int1}, considering general nonlinearity $u^pu_x$, $p\	in[1,4)$, instead of $uu_x$, and $h(t)=h_i(t)=0$, for $i=1,2,3,4$. Being precise, they proved that under the effect of the damping mechanism the energy associated with the solutions of the system decays exponentially.


Now, some references of internal control problems are presented. This problem was first addressed in \cite{zhang} and after that in \cite{zhang1}. In both cases the authors considered the Kawahara equation in a periodic domain $\mathbb{T}$ with a distributed control of the form \[ f(t,x)=(Gh)(t,x):= g(x)(  h(t,x)-\int_{\mathbb{T}}g(y) h(t,y) dy), \] where $g\in C^\infty (\mathbb T)$ supported in $\omega\subset\mathbb{T}$ and $h$ is a control input. Here, it is important to observe that the control in consideration has a different form as presented in \eqref{f}, and the result is proven in a different direction from what we will present in this manuscript.

Still related with internal control issues, Chen \cite{MoChen} presented results considering the Kawahara equation \eqref{int1} posed on a bounded interval with a distributed control $f(t,x)$ and homogeneous boundary conditions. She showed the result taking advantage of a Carleman estimate associated with the linear operator of the Kawahara equation with an internal observation. With this in hand, she was able to get a null controllable result when $f$ is effective in a $\omega\subset(0,L)$.  As the results obtained by her do not answer all the issues of the internal controllability, in a recent article \cite{CaGo} the authors closed some gaps left in \cite{MoChen}. Precisely, considering the system \eqref{int1} with an internal control $f(t,x)$ and homogeneous boundary conditions, the authors are able to show that the equation in consideration is exact controllable in $L^2$-weighted Sobolev spaces and, additionally, the Kawahara equation is controllable by regions on $L^2$-Sobolev space, for details see \cite{CaGo}.

Finally, with respect to a new tool to find control properties for dispersive systems, we can cite a recent work of the first two authors \cite{CaSo}. In this work, the authors showed a new type of controllability for a dispersive fifth order equation that models water waves, what they called \textit{overdetermination control problem}. Precisely, they are able  to find a control acting at the boundary that guarantees that the solution of the problem under consideration satisfies an integral overdetermination condition. In addition, when the control acts internally in the system, instead of the boundary, the authors proved that this condition is satisfied. These problems give answers that were left open in \cite{CaGo} and present a new way to prove boundary and internal controllability results for a fifth order KdV type equation.

\subsection{Heuristic and outline of the article} The goal of this manuscript is to investigate and discuss control problems with an integral condition on an unbounded domain. Precisely, we study the internal control problem when the solution of the system satisfies \eqref{cond. de con.2}, so we intend to extend - for unbounded domains - a new way to prove internal control results for the system \eqref{int1}, initially proposed in \cite{Fa1,FA}, for KdV equation, and more recently in \cite{CaSo}, for Kawahara equation in a bounded domain. Thus, for this type of \textit{integral overdetermination condition} the first results on the solvability of control problems for the IBVP of Kawahara equation on unbounded domains are obtained in the present paper.

The first result, Theorem \ref{main1}, is concerning the \textit{internal overdetermination control problem}. Roughly speaking, we are able to find an appropriate control $f_0$, acting on $[0,T]$ such that integral condition \eqref{cond. de con.2} it turns out. First, we borrowed the existence of solutions for the IBVP \eqref{2} of \cite{MC}. With these results in hand, for the special case when $s=0$,  Theorem \ref{main1} is first proved for the linear system associated to \eqref{2} and after that, using a fixed point argument, extended to the nonlinear system. The main ingredients are auxiliary lemmas presented in the Section \ref{Sec2}. In one of these lemmas (see Lemma \ref{lss} below)  we are able to find two appropriate applications that link the internal control term $f_0(t)$ with the overdetermination condition \eqref{cond. de con.2}, namely
\begin{equation*}
\begin{split}
\Lambda:  L^{p}(0,T)&\longrightarrow \widetilde{W}^{1,p}(0,T)\\
\quad \quad  \quad f_0 &\longmapsto (\Lambda f_0)(\cdot)= \int_{\mathbb{R}^+}u(\cdot,x)\omega(x)dx
\end{split}
\end{equation*}
and
\begin{equation*}
\begin{array}{lll}
A: L^{p}(0,T) &\longrightarrow L^{p}(0,T)\\
\quad \quad \quad f_0 &\longmapsto \displaystyle (Af_{0})(\cdot)= \frac{\varphi'(\cdot)}{g_{1}(\cdot)} - \frac{1}{g_{1}(\cdot)}\int_{\mathbb{R}^{+}}^{}u(t,x)(\alpha\omega' + \beta\omega''' - \omega''''')dx,
\end{array}
\end{equation*}
where, $$ g_{1}(\cdot)= \int_{0}^{L}g(\cdot,x)\omega(x)dx.$$ So, we  prove that such application has an inverse which is continuous, by Banach's theorem, showing the lemma in question, and so, reaching our goal, to prove Theorem \ref{main1}.

With the previous result in hand, the answer for the Problem $\mathcal{C}$ is given by  Theorem \ref{main2}. This result gives us a minimal time which the integral condition \eqref{cond. de con.2} is satisfied. To be more precise, Theorem \ref{main2} is proved in three parts. First part, we give a refinement of Lemma \ref{lss}, namely, Lemma \ref{lss1}. With this in hand, we need, in a second moment, to use the scaling of our equation \eqref{2} to produce a ``new" Kawahara equation on $Q^+_T$ . This gives us the possibility to use the Theorem \ref{main1} and, with help of Lemma \ref{lss1}, reach the proof of Theorem \ref{main2}.

Finally, as a consequence of Theorem \ref{main1}, we produce a type of exact controllability result (Corollary \ref{teocontrol}). More precisely, we show that the mass of the system \eqref{mass} is reached on the final time $T$,  that is,  \eqref{exat} holds.

Thus, we finish our introduction showing the structure of the manuscript.  Section \ref{Sec1} is devoted to presenting some preliminaries, which are used throughout the article. Precisely, we present the Fourier restriction spaces related with the operator of the Kawahara, moreover, reviewed the main results of the well-posedness for the fifth order KdV equation in these spaces. In the Section \ref{Sec2} we present some auxiliary lemmas which help us to prove the internal controllability results. The overdetermination control results, when the control is acting internally, is presented in the Section \ref{Sec3}, that is, we present the proof of the main results of the manuscript, Theorems \ref{main1}, \ref{main2} and Corollary \ref{teocontrol}. Finally, in the Section \ref{Sec4} we present some further comments and some conclusions of the generality of the work.

\section{Preliminaries}\label{Sec1}
\subsection{Fourier restriction spaces}  Let $f$ be a Schwartz function, i.e., $f \in \mathcal{S}_{t,x}(\R \times \T)$, $\wt{f}$ or $\ft (f)$ denotes the space-time Fourier transform of $f$ defined by
\[\wt{f}(\tau,\xi)=\frac{1}{2\pi}\int_{\mathbb{R}^2}e^{-ix\xi}e^{-it\tau}f(t,x)\; dxdt .\]
Moreover, we use $\ft_x$ (or $\wh{\;}$ ) and $\ft_t$ to denote the spatial and temporal Fourier transform, respectively.

For given $s,b\in\mathbb{R}$, we define the space $X^{s,b}$ associated to \eqref{2} as the closure of $\mathcal{S}_{t,x}(\R \times \T)$ under the norm
$$
\|f\|_{X^{s, b}}^{2}=\int_{\mathbb{R}^{2}}\langle\xi\rangle^{2 s}\left\langle\tau-\xi^{5}\right\rangle^{2 b}|\widetilde{f}(\tau, \xi)|^{2} d \xi d \tau
$$
where  $\langle\cdot\rangle=(1+|\cdot|^2)^{1/2}$.

As well-known, the $X^{s, b}$ space with $b>\frac{1}{2}$ is well-adapted to study the IVP of dispersive equations. The function space equipped with the Fourier restriction norm, which is the so-called $X^{s,b}$ spaces, has been proposed by Bourgain \cite{Bourgain1993-1,Bourgain1993-2} to solve the periodic NLS and generalized KdV. Since then, it has played a crucial role in the theory of dispersive equations, and has been further developed by many researchers, in particular,  Kenig, Ponce and Vega \cite{KPV1996} and Tao \cite{Tao2001}. 

In our case, to study the IBVP \eqref{2}, is requested us to introduce modified $X^{s, b}$-type spaces. So, we define the (time-adapted) Bourgain space $Y^{s, b}$ associated to \eqref{2} as the completion of $\mathcal{S}\left(\mathbb{R}^{2}\right)$ under the norm
$$
\|f\|_{Y^{s, b}}^{2}=\int_{\mathbb{R}^{2}}\langle\tau\rangle^{\frac{2 s}{5}}\left\langle\tau-\xi^{5}\right\rangle^{2 b}|\widetilde{f}(\tau, \xi)|^{2} d \xi d \tau.
$$
Additionally, due the study of the of the IBVP introduced in \cite{MC}, they used the low frequency localized $X^{0, b}$-type space with $b>\frac{1}{2}$ in the nonlinear estimates. Hence, we need also define $D^{\alpha}$ space as the completion of $\mathcal{S}\left(\mathbb{R}^{2}\right)$ under the norm
$$
\|f\|_{D^{\alpha}}^{2}=\int_{\mathbb{R}^{2}}\langle\tau\rangle^{2 \alpha} 1_{\{\xi:|\xi| \leq 1\}}(\xi)|\widetilde{f}(\tau, \xi)|^{2} d \xi d \tau
$$
where $1_{A}$ is the characteristic functions on a set $A$. With this in hand, now we set the solution space denoted by $Z_{1}^{s, b, \alpha}$ with the following norm
$$
\|f\|_{Z_{1}^{s, b, \alpha}\left(\mathbb{R}^{2}\right)}=\sup _{t \in \mathbb{R}}\|f(t, \cdot)\|_{H^{s}}+\sum_{j=0}^{1} \sup _{x \in \mathbb{R}}\left\|\partial_{x}^{j} f(\cdot, x)\right\|_{H^{\frac{s+2-j}{5}}}+\|f\|_{X^{s, b} \cap D^{\alpha}}.
$$
The spatial and time restricted space of $Z_{1}^{s, b, \alpha}\left(\mathbb{R}^{2}\right)$ is defined by the standard way:
$$
Z_{1}^{s, b, \alpha}\left((0, T) \times \mathbb{R}^{+}\right)=\left.Z_{1}^{s, b, \alpha}\right|_{(0, T) \times \mathbb{R}^{+}}
$$
equipped with the norm
$$
\|f\|_{Z_{1}^{g, b, \alpha}\left((0, T) \times \mathbb{R}^{+}\right)}=\inf _{g \in Z_{1}^{s, b, \alpha}}\left\{\|g\|_{Z_{1}^{s, b, \alpha}}: g(t, x)=f(t, x) \text { on }(0, T) \times \mathbb{R}^{+}\right\}.
$$

\subsection{Overview of the well-posedness results} In this section we are interested to present the well-posedness results for the Kawahara system, namely, 
\begin{equation}\label{pro2}
\left\lbrace
\begin{array}{llr}
u_{t} + \alpha u_{x} + \beta u_{xxx} - u_{xxxxx} = f(t,x) & \mbox{em} \ [0,T]\times\mathbb{R}^{+}, \\ 
u(t,0)=\mu(t), \ u_{x}(t,0)= \nu(t) & \mbox{em} \ [0,T], \\
 u(0,x) = u_{0}(x) & \mbox{em} \ \mathbb{R}^{+}.
\end{array}\right. 
\end{equation} 
The results presented here are borrowed from \cite{MC}  and give us good properties of the IBVP \eqref{pro2}. The first one give a relation of the nonlinearity involved in our problem with the Fourier restriction spaces introduce in the previous subsection. Precisely, we have the nonlinear term $f=uu_x$ can be controlled in the  $X^{s,-b}$ norm.
\begin{proposition}\label{pdxv} 
For $-7 / 4<s$, there exists $b=b(s)<1 / 2$ such that for all $\alpha>1 / 2$, we have
\begin{equation}\label{dxv}
\left\|\partial_{x}(u v)\right\|_{X^{s,-b}} \leq c\|u\|_{X^{s, b} \cap D^{\alpha}}\|v\|_{X^{s, b} \cap D^{\alpha}}.
\end{equation}
\end{proposition}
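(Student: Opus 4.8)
The plan is to establish the bilinear estimate \eqref{dxv} by the Fourier restriction (Bourgain) method: dualize, reduce to a weighted convolution bound on the Fourier side, and drive the whole argument with the algebraic resonance identity of the fifth-order symbol. First I would dualize. Since $X^{s,-b}$ is the dual of $X^{-s,b}$ relative to the $L^2_{t,x}$ pairing, it suffices to bound the trilinear form $\left|\int_{\R^2}\partial_x(uv)\,\overline{w}\,dx\,dt\right|$ by $c\,\|u\|_{X^{s,b}\cap D^\alpha}\|v\|_{X^{s,b}\cap D^\alpha}\|w\|_{X^{-s,b}}$. Passing to space-time Fourier variables, writing $\xi=\xi_1+\xi_2$, $\tau=\tau_1+\tau_2$, and normalizing by
$$f=\langle\xi_1\rangle^s\langle\tau_1-\xi_1^5\rangle^b\,\wt u,\qquad g=\langle\xi_2\rangle^s\langle\tau_2-\xi_2^5\rangle^b\,\wt v,\qquad h=\langle\xi\rangle^{-s}\langle\tau-\xi^5\rangle^b\,\wt w,$$
all three in $L^2$, the problem reduces to a uniform bound for
$$\int \frac{|\xi|\,\langle\xi\rangle^{s}}{\langle\xi_1\rangle^{s}\langle\xi_2\rangle^{s}}\cdot\frac{f\,g\,h}{\langle\sigma_1\rangle^{b}\langle\sigma_2\rangle^{b}\langle\sigma\rangle^{b}},$$
where $\sigma=\tau-\xi^5$ and $\sigma_j=\tau_j-\xi_j^5$, the integral being over the convolution hyperplane.

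The engine of the proof is the resonance identity. A direct computation gives
$$\sigma-\sigma_1-\sigma_2=-(\xi^5-\xi_1^5-\xi_2^5)=-5\,\xi\,\xi_1\,\xi_2\,(\xi_1^2+\xi_1\xi_2+\xi_2^2).$$
Since the quadratic factor is comparable to $\max(|\xi_1|,|\xi_2|)^2$, the resonance function has size $|H|\sim|\xi|\,|\xi_1|\,|\xi_2|\,\max(|\xi_1|,|\xi_2|)^2$. Because the three modulations sum (up to sign) to $-H$, at least one of $\langle\sigma\rangle,\langle\sigma_1\rangle,\langle\sigma_2\rangle$ is $\gtrsim\langle H\rangle$; this gain is exactly what is available to absorb the derivative $|\xi|$ and the negative Sobolev weights.

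Then I would run a dyadic decomposition, assigning dyadic sizes $N,N_1,N_2$ to the frequencies and $L,L_1,L_2$ to the modulations, and split into the high–high regime ($N_1\sim N_2$) and the high–low/low–high regime ($N\sim\max(N_1,N_2)$). In each regime one identifies which modulation carries the resonance gain, places $\langle H\rangle^{-b}$ there, and reduces the remainder to elementary $L^2$ convolution/counting bounds (integrating out a modulation via $\int\langle\cdot-a\rangle^{-2b}\langle\cdot-c\rangle^{-2b}\lesssim\langle a-c\rangle^{-(4b-1)}$ for $4b>1$, then integrating one spatial frequency against the Jacobian $dH/d\xi_1$). Summing the resulting geometric series in the dyadic parameters closes the estimate once $s>-7/4$ and $b<1/2$ is taken close enough to $1/2$. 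The low-frequency zone $|\xi|\le 1$ must be isolated and treated apart: there the dispersion degenerates, $H$ is small, and the $X^{s,b}$ weight yields no usable gain, which is precisely why the right-hand side also carries the $D^\alpha$ norm; in that zone one renormalizes the low-frequency factors with the $\langle\tau\rangle^\alpha$ weight and uses $\alpha>1/2$ to recover the missing control.

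The main obstacle is the simultaneous borderline nature of two competing constraints. Convergence of the modulation integrals forces $b$ strictly below $1/2$, yet $b$ must be taken close to $1/2$ to harvest nearly the full resonance gain $\langle H\rangle^{b}$; meanwhile the worst frequency interaction (the high–high regime) forces the regularity threshold $s>-7/4$ once the resonance gain is balanced against the derivative loss and the counting/Jacobian factors. Tracking these two thresholds together, and verifying that the low-frequency $D^\alpha$ contribution is genuinely controlled for some $\alpha>1/2$, is where the delicate bookkeeping lies; the remaining dyadic summation is routine.
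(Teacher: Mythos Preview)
The paper does not supply its own argument for this proposition; its entire proof is the citation ``See \cite[Proposition 5.1]{MC}.'' Your sketch reproduces precisely the standard Fourier-restriction/Bourgain-space machinery (duality, the fifth-order resonance identity $\sigma-\sigma_1-\sigma_2=-5\xi\xi_1\xi_2(\xi_1^2+\xi_1\xi_2+\xi_2^2)$, dyadic case splitting, and the separate $D^\alpha$ treatment of the low-frequency region) that Cavalcante--Kwak use in that reference, so your approach is correct and coincides with the cited proof.
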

\begin{proof}
See \cite[Proposition 5.1]{MC}.
\end{proof}

Now on, we will consider the following: $s= 0$, $b(s)= b_{0}$,  $\alpha(s)= \alpha_{0}$ and $ Z_{1}^{0,b_{0}, \alpha_{0}}(Q^{+}_{T})= Z(Q^{+}_{T})$.  As a consequence of the previous proposition, we have the following. 

\begin{corollary}\label{pdxv2} 
There exists $b_{0} \in (0,\frac{1}{2})$  such that for all $\alpha_{0} > \frac{1}{2},$ follows that 
\begin{equation}\label{dxv2}
\|\partial_{x}(uv)\|_{X^{0,-b_{0}}(Q^{+}_{T})} \leq C\|u\|_{Z(Q^{+}_{T})}\|v\|_{Z(Q^{+}_{T})},
\end{equation}
for any $u, v \in Z(Q^{+}_{T}).$
\end{corollary}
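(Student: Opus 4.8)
The plan is to derive this directly from Proposition \ref{pdxv} by specializing the exponent $s$, dominating the relevant norm by the full $Z$-norm, and then transferring the global estimate to the time-space restricted setting via the standard infimum-over-extensions argument. First I would apply Proposition \ref{pdxv} with $s=0$, which is admissible since $0>-7/4$. This produces an exponent $b_0:=b(0)<1/2$ such that, for every $\alpha_0>1/2$,
\begin{equation*}
\|\partial_x(uv)\|_{X^{0,-b_0}}\leq c\,\|u\|_{X^{0,b_0}\cap D^{\alpha_0}}\,\|v\|_{X^{0,b_0}\cap D^{\alpha_0}}
\end{equation*}
for all $u,v$ on $\R^2$. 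If the value $b(0)$ supplied by \cite{MC} were not already positive, I would simply enlarge it: since $\langle\tau-\xi^5\rangle\geq 1$, raising $b$ decreases the $X^{0,-b}$ norm on the left and increases the $X^{0,b}$ norm on the right, so the inequality is preserved and we may assume $b_0\in(0,1/2)$.

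Next I would record the trivial but essential domination coming straight from the definition of the solution space: since
\begin{equation*}
\|f\|_{Z_1^{0,b_0,\alpha_0}}=\sup_{t}\|f(t,\cdot)\|_{L^2}+\sum_{j=0}^{1}\sup_{x}\|\partial_x^j f(\cdot,x)\|_{H^{\frac{2-j}{5}}}+\|f\|_{X^{0,b_0}\cap D^{\alpha_0}},
\end{equation*}
each term is nonnegative, so $\|f\|_{X^{0,b_0}\cap D^{\alpha_0}}\leq\|f\|_{Z_1^{0,b_0,\alpha_0}}$. Combined with the displayed bilinear estimate, this gives the global inequality with the $Z_1^{0,b_0,\alpha_0}(\R^2)$ norm on the right.

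The remaining step, and the only one requiring care, is the passage to $Q^+_T$. Given $u,v\in Z(Q^+_T)$, I would invoke the infimum definition of the restricted norm: for each $\eps>0$ choose extensions $\widetilde u,\widetilde v\in Z_1^{0,b_0,\alpha_0}(\R^2)$ with $\widetilde u|_{Q^+_T}=u$, $\widetilde v|_{Q^+_T}=v$ and $\|\widetilde u\|_{Z_1^{0,b_0,\alpha_0}}\leq\|u\|_{Z(Q^+_T)}+\eps$, and likewise for $v$. Because differentiation is a local operation and $Q^+_T$ is open, $\partial_x(\widetilde u\,\widetilde v)$ agrees with $\partial_x(uv)$ on $Q^+_T$, hence is an admissible extension. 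Therefore
\begin{equation*}
\|\partial_x(uv)\|_{X^{0,-b_0}(Q^+_T)}\leq\|\partial_x(\widetilde u\,\widetilde v)\|_{X^{0,-b_0}}\leq c\,\|\widetilde u\|_{Z_1^{0,b_0,\alpha_0}}\,\|\widetilde v\|_{Z_1^{0,b_0,\alpha_0}}\leq c\bigl(\|u\|_{Z(Q^+_T)}+\eps\bigr)\bigl(\|v\|_{Z(Q^+_T)}+\eps\bigr).
\end{equation*}
Letting $\eps\to0$ yields \eqref{dxv2} with $C=c$. I expect the whole argument to be essentially mechanical; the one point to state explicitly, rather than a genuine obstacle, is that the product of the two chosen extensions is itself a valid extension of $\partial_x(uv)$, which is exactly what licenses the use of the infimum definition and the global bilinear bound.
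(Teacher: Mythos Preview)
Your argument is correct and is exactly the standard way to realize what the paper leaves implicit: the paper does not spell out a proof of this corollary but simply records it ``as a consequence of the previous proposition,'' and your specialization $s=0$, domination of $\|\cdot\|_{X^{0,b_0}\cap D^{\alpha_0}}$ by the $Z$-norm, and infimum-over-extensions restriction argument are precisely the intended mechanics. There is nothing to add.
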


Now, we are interested for a special case of the well-posedness result presented in  \cite{MC}. Be precise, considering $s=0$, \cite[Theorem 1.1]{MC} gives us the following result.
\begin{theorem}\label{BC1} Let $T > 0$ and $u_{0} \in L^{2}(\mathbb{R}^{+})$, $\mu \in H^{\frac{2}{5}}(0,T)$, $\nu \in H^{\frac{1}{5}}(0,T)$ and $ f \in X^{0,-b_{0}}(Q^{+}_{T}),$ for $b_{0} \in (0, \frac{1}{2})$. Then there exists a unique solution $u := S(u_{0}, \mu, \nu, f) \in Z(Q^{+}_{T})$ of \eqref{pro2} such that
\begin{equation}\label{norma de u1}
\begin{array}{lll}
\|u\|_{Z(Q_{T}^{+})} \leq C_{0}\left(\|u_{0}\|_{L^{2}(\mathbb{R}^{+})} + \|\mu\|_{H^{\frac{2}{5}}(0,T)}+\|\nu\|_{H^{\frac{1}{5}}(0,T)} + \|f\|_{X^{0,-b_{0}}(Q_{T}^{+})} \right)
\end{array}
\end{equation} 
where $C_{0} > 0$ is a positive constant depending only of $b_{0}, \alpha_{0}$ and $T$.
\end{theorem}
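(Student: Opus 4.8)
The plan is to obtain Theorem \ref{BC1} as the specialization to $s=0$ of the general half-line well-posedness theory of Cavalcante and Kwak \cite{MC}, using the crucial observation that the system \eqref{pro2} is purely \emph{linear}: here $f$ is a prescribed source in $X^{0,-b_0}(Q_T^+)$ rather than the nonlinear term $uu_x$, so the nonlinear estimate of Proposition \ref{pdxv} (equivalently Corollary \ref{pdxv2}) is not needed, and the solution map $S(u_0,\mu,\nu,f)$ is produced by linear superposition instead of a contraction argument.

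First I would reduce the IBVP on $\mathbb{R}^+$ to a problem on the whole line. Extend $u_0\in L^2(\mathbb{R}^+)$ to $\tilde u_0\in L^2(\mathbb{R})$ and $f\in X^{0,-b_0}(Q_T^+)$ to $\tilde f\in X^{0,-b_0}(\mathbb{R}^2)$ by bounded extension operators, losing only a constant in the respective norms. Writing $W(t)$ for the unitary group generated by $-(\alpha\partial_x+\beta\partial_x^3-\partial_x^5)$, the candidate solution takes the form free evolution $+$ Duhamel term $+$ boundary forcing, namely
\[ u(t) = W(t)\tilde u_0 + \int_0^t W(t-t')\tilde f(t')\,dt' + \mathcal{L}\big(\mu - p_1,\, \nu - p_2\big), \]
where $p_1,p_2$ are the traces $u(\cdot,0)$ and $u_x(\cdot,0)$ generated by the first two terms, and $\mathcal{L}$ denotes the Duhamel boundary forcing operators adapted to the fifth-order dispersion, which solve the homogeneous equation while contributing prescribed traces at $x=0$. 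Since the fifth-order symbol selects exactly two boundary conditions on the right half-line, the traces of $\mathcal{L}$ feed back into $(p_1,p_2)$ through a Volterra-type linear system that is inverted on $[0,T]$, thereby fixing the effective boundary data.

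Next I would assemble the four linear estimates measured in the $Z(Q_T^+)$ norm: the group bound $\|W(t)\tilde u_0\|_{Z}\lesssim\|u_0\|_{L^2(\mathbb{R}^+)}$, the Duhamel bound $\|\int_0^t W(t-t')\tilde f\,dt'\|_{Z}\lesssim\|f\|_{X^{0,-b_0}(Q_T^+)}$, and the two boundary-forcing bounds controlling $\mathcal{L}$ by $\|\mu\|_{H^{2/5}(0,T)}$ and $\|\nu\|_{H^{1/5}(0,T)}$. Each of these relies on the spatial (Kato smoothing) and temporal trace estimates encoded in the norm of $Z_1^{0,b_0,\alpha_0}$, in particular the $\sup_x\|\partial_x^j u(\cdot,x)\|_{H^{(2-j)/5}}$ components, so that $u(\cdot,0)$ and $u_x(\cdot,0)$ are well defined and the boundary conditions are attained. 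Summing these four contributions yields \eqref{norma de u1} with $C_0=C_0(b_0,\alpha_0,T)$, and uniqueness then follows immediately from linearity together with this a priori bound.

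The main obstacle is the construction and the mapping properties of the boundary forcing operators $\mathcal{L}$ for the fifth-order operator: one must analyze the oscillatory integral associated with the fifth-order symbol (counting the correct number of decaying and growing modes on $\mathbb{R}^+$ to justify prescribing exactly the two traces $u(\cdot,0)$ and $u_x(\cdot,0)$), and prove that $\mathcal{L}$ sends the boundary data into $X^{0,b_0}\cap D^{\alpha_0}$ with the sharp $H^{2/5}$ and $H^{1/5}$ trace regularities while remaining compatible with the low-frequency space $D^{\alpha_0}$. This half-line harmonic analysis, rather than any energy-type bookkeeping, is the delicate part, and it is precisely the content supplied by \cite{MC}; once it is available, the $s=0$ linear statement of Theorem \ref{BC1} is immediate.
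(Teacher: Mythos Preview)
Your proposal is correct and aligns with the paper's treatment: the paper does not prove Theorem~\ref{BC1} at all but simply cites it as the special case $s=0$ of \cite[Theorem~1.1]{MC}, and the argument you sketch (extension to the line, decomposition into free evolution, Duhamel, and fifth-order boundary forcing operators, together with the corresponding linear estimates in $Z_1^{0,b_0,\alpha_0}$) is precisely the machinery developed in \cite{MC} that the paper is invoking. In that sense you have supplied more detail than the paper itself, but the route is the same.
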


\section{Key lemmas}\label{Sec2}

In this section we are interested to prove some auxiliary lemmas for the solutions of the  system
\begin{equation}\label{Al}
\left\lbrace
\begin{array}{llr}
u_{t} + \alpha u_{x} + \beta u_{xxx} - u_{xxxxx} = f(t,x) & \mbox{in} \ [0,T]\times\mathbb{R}^{+}, \\ 
u(t,0)=\mu(t), \ u_{x}(t,0)= \nu(t) & \mbox{on} \ [0,T], \\
 u(0,x) = u_{0}(x) & \mbox{in} \ \mathbb{R}^{+}.
\end{array}\right. 
\end{equation} 
These lemmas will be the key to proof the main results of this work. 

To do this, consider $\omega\in\mathcal{J}$ defined by \eqref{jota} and define  $q:[0,T] \longrightarrow \mathbb{R}$ as follows
\begin{equation}\label{q}
q(t)= \int_{\mathbb{R}^{+}}u(t,x)\omega(x)dx,
\end{equation}
where $u:= S(u_{0}, \mu, \nu, f_{1} + f_{2x})$  is solution of \eqref{Al} guaranteed by Theorem \ref{BC1}. The next two auxiliary lemmas are the key point to show the main results of this work. The first one, gives that $q\in W^{1,p}(0,T)$ and can be read as follows.
\begin{lemma}\label{1.1} Let  $T > 0$, $p \in [2, \infty]$ and the assumptions of Theorem \ref{BC1} be satisfied,  with $\ f= f_{1} + f_{2x}$, where $f_{1} \in L^{p}(0,T; L^{2}(\mathbb{R}^{+}))$,  $f_{2} \in L^{p}(0,T; L^ {1}(\mathbb{R}^{+}))$ and $\mu, \nu \in L^{p}(0,T)$. If $u \in Z(Q^{+}_{T})$ is a solution of \eqref{pro2} and $\omega \in \mathcal{J}$, defined in \eqref{jota}, then the function $q\in W^{1,p}(0,T)$ and the relation
\begin{equation}\label{q'}
\begin{split}
q'(t) =& \ \omega'''(0)\nu(t) - \omega''''(0)\mu(t) + \int_{\mathbb{R}^{+}}^{}f_{1}(t,x)\omega(x)dx- \int_{\mathbb{R}^{+}}^{}f_{2}(t,x)\omega'(x)dx\\
&+ \int_{\mathbb{R}^{+}}^{}u(t,x)[\alpha\omega'(x) + \beta\omega'''(x) - \omega'''''(x)]dx  
\end{split}
\end{equation} 
holds for almost all $t\in[0,T]$. In addition, the function $q'\in L^{p}(0,T)$ can be estimate in the following way
\begin{equation}\label{norma de q'}
\begin{split}
\|q'\|_{L^{p}(0,T)} \leq& \ C\left(( \|u_{0}\|_{L^{2}(\mathbb{R}^{+})} + \|\mu\|_{(L^{p}\cap H^{\frac{2}{5}})(0,T)} + \|\nu\|_{(L^{p}\cap H^{\frac{1}{5}})(0,T)} \right.\\
&+\left.\|f_{1}\|_{L^{p}(0,T;L^{2}(\mathbb{R}^{+}))} +  \|f_{2}\|_{L^{p}(0,T;L^{1}(\mathbb{R}^{+}))} + \|f_{2x}\|_{X^{0,-b_{0}}(Q^{+}_{T})} \right)
\end{split}
\end{equation}
with $C= C(|\alpha|,|\beta|, T, \|\omega\|_{\mathbb{R}^{+}}) > 0$ a constant that is nondecreasing with increasing $T$.
\end{lemma}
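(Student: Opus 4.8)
The plan is to obtain \eqref{q'} by differentiating $q$ in time, replacing $u_t$ by the right-hand side of the equation in \eqref{Al}, and then transferring every spatial derivative onto $\omega$ via repeated integration by parts; the whole point is to track exactly which boundary contributions at $x=0$ survive. Formally,
\begin{equation*}
q'(t) = \int_{\mathbb{R}^{+}} u_t\,\omega\,dx = \int_{\mathbb{R}^{+}}\left(f - \alpha u_x - \beta u_{xxx} + u_{xxxxx}\right)\omega\,dx .
\end{equation*}
Since $\omega \in \mathcal{J}$ obeys $\omega(0)=\omega'(0)=\omega''(0)=0$, the first- and third-order terms integrate by parts with no surviving boundary term and produce $\alpha\int u\omega'\,dx$ and $\beta\int u\omega'''\,dx$; writing $f=f_1+f_{2x}$ and integrating the $f_{2x}$ piece by parts yields $-\int f_2\omega'\,dx$, the boundary term again vanishing because $\omega(0)=0$. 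The delicate contribution is the fifth-order one: integrating $\int u_{xxxxx}\,\omega\,dx$ by parts five times, the first three integrations lose their boundary terms thanks to $\omega(0)=\omega'(0)=\omega''(0)=0$, whereas the last two retain $u_x(t,0)\omega'''(0)=\nu(t)\omega'''(0)$ and $-u(t,0)\omega''''(0)=-\mu(t)\omega''''(0)$ coming from the traces prescribed in \eqref{Al}, and leave the interior term $-\int u\omega'''''\,dx$. Collecting all pieces gives precisely \eqref{q'}.

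To justify this for $u\in Z(Q^{+}_{T})$, which is only $L^2$ in space, I would first establish \eqref{q'} for smooth, spatially decaying data, where differentiating under the integral sign and all integrations by parts — including the vanishing of the boundary terms at $x=+\infty$, guaranteed by $\omega^{(k)}\in C_0(\mathbb{R}^{+})$ for $k\leq 4$ — are classical. The general case follows by density together with the continuity of the solution map $S$ provided by \eqref{norma de u1}: both sides of \eqref{q'} depend continuously on the data in the relevant norms, so the identity passes to the limit, and $q$ is recovered as the antiderivative of an $L^p$ function, whence $q\in W^{1,p}(0,T)$.

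For the bound \eqref{norma de q'}, I would estimate each term of \eqref{q'} in $L^p(0,T)$ by the triangle inequality. The boundary terms are controlled by $\|\nu\|_{L^p}$ and $\|\mu\|_{L^p}$; Cauchy--Schwarz gives $\left|\int f_1\omega\,dx\right|\leq\|f_1(t)\|_{L^2}\|\omega\|_{L^2}$ and $\left|\int u(\alpha\omega'+\beta\omega'''-\omega''''')\,dx\right|\leq\|u(t)\|_{L^2}\|\alpha\omega'+\beta\omega'''-\omega'''''\|_{L^2}$, and Hölder gives $\left|\int f_2\omega'\,dx\right|\leq\|f_2(t)\|_{L^1}\|\omega'\|_{L^\infty}$. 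The factor $\|u(t)\|_{L^2}$ is dominated uniformly in $t$ by $\|u\|_{Z(Q^{+}_{T})}$, which Theorem \ref{BC1} bounds in terms of $\|u_0\|_{L^2}+\|\mu\|_{H^{2/5}}+\|\nu\|_{H^{1/5}}+\|f\|_{X^{0,-b_0}}$; finally, since $p\geq 2$ and $T$ is finite, the embedding $L^p(0,T;L^2(\mathbb{R}^{+}))\hookrightarrow L^2_{t,x}=X^{0,0}\hookrightarrow X^{0,-b_0}$ turns the $f_1$-part of $\|f\|_{X^{0,-b_0}}$ into $\|f_1\|_{L^p(0,T;L^2)}$, while $\|f_{2x}\|_{X^{0,-b_0}}$ is retained as in the statement.

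The main obstacle I anticipate is the rigorous term-by-term integration by parts for a solution that is merely $L^2$ in $x$: one must give meaning to the traces $u(t,0)$, $u_x(t,0)$ and to the pairing $\int u\,\omega'''''\,dx$, and confirm that no boundary term escapes as $x\to+\infty$. The $Z$-norm does control the traces (through its $\sup_x\|\partial_x^j u(\cdot,x)\|_{H^{(2-j)/5}}$ components) and the $L^2$ pairing against $\omega^{(5)}\in L^2$, so the density argument can be closed; arranging the approximation so that all five boundary terms converge simultaneously is the technical heart of the argument.
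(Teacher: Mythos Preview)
Your proposal is correct and follows essentially the same route as the paper: the same integration-by-parts computation in $x$, the same tracking of the surviving boundary terms using $\omega\in\mathcal{J}$ and the prescribed traces $\mu,\nu$, and the same term-by-term estimates via Cauchy--Schwarz/H\"older together with \eqref{norma de u1}. The only methodological difference is in how the time-differentiation is made rigorous: the paper pairs the equation against $\psi(t)\omega(x)$ with $\psi\in C_0^\infty(0,T)$ and identifies $q'$ as a weak derivative directly, whereas you propose to first prove the identity for smooth decaying solutions and then pass to the limit by density and continuity of $S$; both are standard and yield the same conclusion.
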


\begin{proof} Considering $\psi \in C_{0}^{\infty}(0,T)$, multiplying \eqref{Al} by $\psi\omega$ and inegrating by parts in $[0,T]\times[0,R]$, for some $R > 0$, we get, using the boundary condition of \eqref{Al} and the hypothesis that $\omega\in\mathcal{J}$, that
\begin{equation*}
\begin{split}
-\int_{0}^{T}\psi'(t)q(t)dt=&
\int_{0}^{T}\int_{\mathbb{R}^{+}}u_{t}(t,x)\psi(t)\omega(x)dxdt \\
=& \int_{0}^{T}\psi(t)\left(\int_{\mathbb{R}^{+}}^{}u(t,x)(\alpha\omega'(x) + \beta\omega'''(x) - \omega'''''(x))dx\right.\\
&+ \int_{\mathbb{R}^{+}}f_{1}(t,x)\omega(x)dx - \int_{\mathbb{R}^{+}}^{}f_{2}(t,x)\omega'(x)dx\\
&-\omega''''(0)\mu(t) + \omega'''(0)\nu(t)\biggr)dt\\
=&\int_0^T\psi(t)r(t)dt,
\end{split}
\end{equation*} 
with $ r: [0,T] \longmapsto \mathbb{R}$ defined by
\begin{equation*}
\begin{split}
r(t)=& \int_{\mathbb{R}^{+}}u(t,x)(\alpha\omega'(x) + \beta\omega'''(x) - \omega'''''(x))dx  -\omega''''(0)\mu(t) + \omega'''(0)\nu(t)\\
&+  \int_{\mathbb{R}^{+}}f_{1}(t,x)\omega(x)dx - \int_{\mathbb{R}^{+}}^{}f_{2}(t,x)\omega'(x)dx\\
:=&I_1+I_2+I_3,
\end{split}
\end{equation*} 
which gives us $q'(t)=r(t)$, where
\begin{equation*}
\begin{split}
I_1=& \int_{\mathbb{R}^{+}}u(t,x)(\alpha\omega'(x) + \beta\omega'''(x) - \omega'''''(x))dx  -\omega''''(0)\mu(t) + \omega'''(0)\nu(t),\\
I_2=& - \int_{\mathbb{R}^{+}}^{}f_{2}(t,x)\omega'(x)dx, \\
I_3=&  \int_{\mathbb{R}^{+}}f_{1}(t,x)\omega(x)dx .
\end{split}
\end{equation*}

It remains for us to prove that $q' \in L^{p}(0,T)$, for $p\in[2,\infty]$. To do it, we need to bound each term of \eqref{q'}. We will split this analysis in two steps. 

\vspace{0.2cm}
\noindent\textbf{Step 1.} $2 \leq p < \infty$
\vspace{0.2cm}

Let us first to bound $I_1$. To do this, note that, for $t \in [0,T]$, we have
\begin{multline*}
 \biggl| \int_{\mathbb{R}^{+}}u(t,x)(\alpha\omega'(x)+ \beta\omega'''(x) - \omega'''''(x))dx\biggr| \\
 \leq (|\alpha|\|\omega'\|_{L^{2}(\mathbb{R}^{+})} + |\beta|\|\omega'''\|_{L^{2}(\mathbb{R}^{+})} + \|\omega'''''\|_{L^{2}(\mathbb{R}^{+})})\|u(t,\cdot)\|_{L^{2}(\mathbb{R}^{+})}.
\end{multline*}
Moreover, the trace terms are bounded thanks to the fact that $\omega\in\mathcal{J}$. Thus, this yields that
\begin{equation*}
\begin{split}
 \biggl\| \int_{\mathbb{R}^{+}}u(t,x)(\alpha\omega(x)& +\beta\omega'''(x) - \omega'''''(x))dx\biggr\|_{L^{p}(0,T)}\leq C(|\alpha|, |\beta|, \|\omega\|_{H^{5}(\mathbb{R}^{+})})\|u\|_{L^{p}(0,T;L^{2}(\mathbb{R}^{+}))}.
\end{split}
\end{equation*}
Since
$$
\|u\|_{L^{p}(0,T; L^{2}(\mathbb{R}^{+}))} \leq  T^{\frac{1}{p}}\|u\|_{C(0,T; L^{2}(\mathbb{R}^{+}))},
$$
we have that 
$$\biggl\| \int_{\mathbb{R}^{+}}u(t,x)(\alpha\omega(x)+ \beta\omega'''(x) - \omega'''''(x))dx\biggr\|_{L^{p}(0,T)} \leq C(|\alpha|, |\beta|, \|\omega\|_{H^{5}(\mathbb{R}^{+})})T^{\frac{1}{p}}\|u\|_{C(0,T; L^{2}(\mathbb{R}^{+}))}.
$$
Now, let us estimate $I_2$. For this case we start observing that
\begin{equation*}
\begin{split}
\biggl| \int_{\mathbb{R}^{+}}f_{2}(t,x)\omega'(x)dx\biggr|\leq& \int_{\mathbb{R}^{+}}^{}|f_{2}(t,x)\omega'(x)|dx \\
\leq& \|\omega'\|_{C(\mathbb{R}^{+})}\|f_{2}(t,\cdot)\|_{L^{1}(\mathbb{R}^{+})}\\
\leq& \ C\|\omega'\|_{H^{1}(\mathbb{R}^{+})}\|f_{2}(t,\cdot)\|_{L^{1}(\mathbb{R}^{+})}\\
\leq & \ C\|\omega\|_{H^{5}(\mathbb{R}^{+})}\|f_{2}(t,\cdot)\|_{L^{1}(\mathbb{R}^{+})},
\end{split}
\end{equation*}
where we have used the following continuous embedding
$$H^{1}(\mathbb{R}^{+}) \hookrightarrow (L^{\infty}(\mathbb{R}^{+}) \cap C(\mathbb{R}^{+})).$$
Therefore, we get that 
$$
\biggl\| \int_{\mathbb{R}^{+}}^{}f_{2}(t,x)\omega'(x)dx\biggr\|_{L^{p}(0,T)} \leq C(\|\omega\|_{H^{5}(\mathbb{R}^{+})})\|f_{2}\|_{L^{p}(0,T; L^{1}(\mathbb{R}^{+}))}.
$$
In a similar way, we can bound $I_3$ as
$$
\biggl\| \int_{\mathbb{R}^{+}}^{}f_{1}(t,x)\omega(x)dx\biggr\|_{L^{p}(0,T)} \leq \|\omega\|_{L^{2}(\mathbb{R}^{+})}\|f_{1}\|_{L^{p}(0,T;L^{2}(\mathbb{R}^{+}))}.
$$

With these estimates in hand and using the hypothesis over $ \mu$ and $\nu$, that is,  $ \mu$ and $\nu$ belonging to $L^{p}(0,T)$, we have $r \in L^{p}(0,T)$, which implies that $q \in W^{1,p}(0,T)$ and 
\begin{equation*}
\begin{split}
\|q'\|_{L^{p}(0,T)} \leq& \widetilde{C}(|\alpha|, |\beta|, T, \|\omega\|_{H^{5}(\mathbb{R}^{+})})\biggl( \|\mu\|_{L^{p}(0,T)} + \|\nu\|_{L^{p}(0,T)} + \|u\|_{Z(Q^{+}_{T})}\\
&+ \|f_{1}\|_{L^{p}(0,T;L^{2}(\mathbb{R}^{+}))} + \|f_{2}\|_{L^{p}(0,T;L^{1}(\mathbb{R}^{+}))} \biggr).
\end{split}
\end{equation*}
Finally, using \eqref{norma de u1} in the previous inequality, \eqref{norma de q'} holds.

\vspace{0.2cm}
\noindent\textbf{Step 2.} $p= \infty$
\vspace{0.2cm}

Observe that thank to the relation \eqref{q'} and the fact that 
$$H^{1}(\mathbb{R}^{+}) \hookrightarrow (L^{\infty}(\mathbb{R}^{+}) \cap C(\mathbb{R}^{+}),$$ 
we get that
\begin{equation*}
\begin{split} 
|q'(t)| \leq & (|\alpha|\|\omega'\|_{L^{2}(\mathbb{R}^{+})} +|\beta|\|\omega'''\|_{L^{2}(\mathbb{R}^{+})} + \|\omega'''''\|_{L^{2}(\mathbb{R}^{+})})\|u(t,\cdot)\|_{L^{2}(\mathbb{R}^{+})} \\&+ \|\omega\|_{L^{2}(\mathbb{R}^{+})} \|f_{1}(t,\cdot)\|_{L^{2}(\mathbb{R}^{+})} 
 + \|\omega'\|_{H^{1}(\mathbb{R}^{+})}\|f_{2}(t,\cdot)\|_{L^{1}(\mathbb{R}^{+})} \\
 &+ |\omega''''(0)||\mu(t)| + |\omega'''(0)||\nu(t)|.
\end{split}
\end{equation*}
Thus, 
\begin{equation*}
\begin{split} 
 \|q'\|_{C(0,T)} \leq
C\left( \|u\|_{Z_{1}(Q_{+}^{T}))} + \|f_{2}\|_{C(0,T;L^{1}(\mathbb{R}^{+}))} + \|f_{1}\|_{C(0,T;L^{2}(\mathbb{R}^{+}))} + \|\mu\|_{C(0,T)} + \|\nu\|_{C(0,T)} \right),
\end{split}
\end{equation*}
with $C= C(|\alpha|,|\beta|, \|\omega\|_{H^{5}(\mathbb{R}^{+})}, |\omega''''(0)|, |\omega'''(0)|) > 0$. Thus, Step 2 is achieved using \eqref{norma de u1} and the proof of the lemma is complete.
\end{proof}

\begin{remarks}We will give some remarks in order related with the previous lemma. 
\begin{itemize}
\item[i.] We are implicitly assuming that $f_{2x}\in L^1(0, T; L^2(\mathbb{R}^+))$ but it is not a problem, since the function that we will take for $f_2$, in our purposes, satisfies that condition.
\item[ii.] When $p =\infty$ the spaces $L^p(0, T)$, $L^p(0, T; L^2(\mathbb{R}^+))$ and $L^p(0, T; L^1(\mathbb{R}^+))$ are replaced by the spaces $C([0, T])$, $C([0, T]; L^2(\mathbb{R}^+))$ and $C([0, T]; L^1(\mathbb{R}^+))$, respectively. So, we can obtain $q\in C^1([0, T])$.
\end{itemize}
\end{remarks}

Now, consider a special case of the system \eqref{Al}, precisely, the following
\begin{equation}\label{Al1}
\left\lbrace
\begin{array}{llr}
u_{t} + \alpha u_{x} + \beta u_{xxx} - u_{xxxxx} = f(t,x) & \mbox{in} \ [0,T]\times\mathbb{R}^{+}, \\ 
u(t,0)=u_{x}(t,0)= 0& \mbox{on} \ [0,T], \\
 u(0,x) =0 & \mbox{in} \ \mathbb{R}^{+}.
\end{array}\right. 
\end{equation} 
For the solutions of this system the next lemma holds.

\begin{lemma}Suppose that $ f \in L^{2}(0,T; L^{2}(\mathbb{R}^{+}))$ and $u:= S(0, 0, 0, f)$ is solution of \eqref{Al1}, then
\begin{equation}\label{u e f}
\int_{\mathbb{R}^{+}}^{}|u(t,x)|^{2}dx \leq 2\int_{0}^{t}\!\!\int_{\mathbb{R}^{+}}^{}f(\tau,x)u(\tau,x)dxdt, \quad \forall  t\in [0,T].
\end{equation} 
\end{lemma}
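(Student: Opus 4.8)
The plan is to establish the standard $L^{2}$ energy estimate by multiplying the equation in \eqref{Al1} by $2u$ and integrating over $\mathbb{R}^{+}$ in space and over $[0,t]$ in time. Since $2uu_{t}=\partial_{t}(u^{2})$, the time derivative yields $\frac{d}{dt}\|u(t,\cdot)\|_{L^{2}(\mathbb{R}^{+})}^{2}$, while the source contributes $2\int_{\mathbb{R}^{+}}fu\,dx$; the entire content of the lemma is that the three spatial-derivative terms, after integration by parts, either vanish or carry a favorable sign once the homogeneous boundary data $u(t,0)=u_{x}(t,0)=0$ are invoked.

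First I would dispose of the lower-order dispersive terms. The transport term gives $2\alpha\int_{\mathbb{R}^{+}}uu_{x}\,dx=\alpha\int_{\mathbb{R}^{+}}\partial_{x}(u^{2})\,dx=-\alpha\,u(t,0)^{2}=0$ by $u(t,0)=0$ and decay as $x\to\infty$. For the third-order term, two integrations by parts give $2\beta\int_{\mathbb{R}^{+}}uu_{xxx}\,dx=2\beta\big([uu_{xx}]_{0}^{\infty}-\tfrac12[u_{x}^{2}]_{0}^{\infty}\big)$, and both boundary contributions vanish because $u(t,0)=u_{x}(t,0)=0$. The fifth-order term is the essential one: integrating by parts three times gives
\begin{equation*}
-2\int_{\mathbb{R}^{+}}uu_{xxxxx}\,dx=-2\Big([uu_{xxxx}]_{0}^{\infty}-[u_{x}u_{xxx}]_{0}^{\infty}+\tfrac12[u_{xx}^{2}]_{0}^{\infty}\Big)=u_{xx}(t,0)^{2},
\end{equation*}
since the first two brackets vanish by the boundary conditions while the third reduces to $\tfrac12[u_{xx}^{2}]_{0}^{\infty}=-\tfrac12\,u_{xx}(t,0)^{2}$. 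Collecting these identities produces the energy balance
\begin{equation*}
\frac{d}{dt}\int_{\mathbb{R}^{+}}|u(t,x)|^{2}\,dx+u_{xx}(t,0)^{2}=2\int_{\mathbb{R}^{+}}f(t,x)u(t,x)\,dx.
\end{equation*}
Discarding the nonnegative trace term $u_{xx}(t,0)^{2}\ge 0$ and integrating in time from $0$ to $t$, using the homogeneous initial datum $u(0,x)=0$, yields exactly \eqref{u e f}.

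The step I expect to be the main obstacle is the rigorous justification of the integrations by parts: the solution $u=S(0,0,0,f)$ furnished by Theorem \ref{BC1} lives in $Z(Q^{+}_{T})$ and is a priori neither smooth nor manifestly decaying, so the manipulations above and the trace $u_{xx}(t,0)$ are only formal at this regularity. The standard remedy is a density argument: approximate $f$ by smooth, compactly supported $f_{n}$, for which the associated solutions $u_{n}$ are smooth with the spatial decay needed to make every boundary term legitimate, so that they satisfy the energy identity—and hence \eqref{u e f}—classically; then pass to the limit by the continuous dependence in \eqref{norma de u1}, which gives $u_{n}\to u$ in $Z(Q^{+}_{T})\hookrightarrow C(0,T;L^{2}(\mathbb{R}^{+}))$ and $\int_{0}^{t}\!\int_{\mathbb{R}^{+}}f_{n}u_{n}\to\int_{0}^{t}\!\int_{\mathbb{R}^{+}}fu$. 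Because the final inequality does not feature the trace term explicitly—it is simply dropped for its sign—this limiting procedure transfers \eqref{u e f} to the general solution without requiring any independent control of $u_{xx}(t,0)$.
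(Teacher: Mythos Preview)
Your argument is correct and essentially identical to the paper's: multiply by $2u$, integrate by parts, observe that the only surviving boundary contribution is the favorable term $u_{xx}(t,0)^{2}\ge 0$, drop it, integrate in time, and close by density. The paper carries out the integration by parts on $[0,R]$ before sending $R\to\infty$ rather than working directly on $\mathbb{R}^{+}$, but this is a cosmetic difference in handling the decay at infinity, not a different idea.
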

\begin{proof}
Consider $f\in C_{0}^{\infty}(Q^{+}_{T})$ and $u= S(0,0, 0, f_{1})$ a smooth solution of \eqref{Al1}. Multiplying \eqref{Al1} by $2u$, integrating by parts on $[0,R]$, for $R > 0$, yields that
\begin{equation*}
\begin{split}
\frac{d}{dt}\int_{0}^{R}|u(t,x)|^{2}dx =& \ 2\int_{0}^{R}f(t,x)u(t,x)dx - \alpha(|u(t,R)|^{2} - |u(t,0)|^{2}) \\
&+  \beta(|u_{x}(t,R)|^{2} - |u_{x}(t,0)|^{2}) + (|u_{xx}(t,R)|^{2} - |u_{xx}(t,0)|^{2}) \\
&-2\beta (u_{xx}(t,R)u(t,R) - u_{xx}(t,0)u(t,0)) \\
&+ 2(u_{xxxx}(t,R)u(t,R) - u_{xxxx}(t,0)u(t,0))\\
&-2(u_{xxx}(t,R)u_{x}(t,R) - u_{xxx}(t,0)u_{x}(t,0)).
\end{split}
\end{equation*}

So, taking $R \rightarrow \infty$, integrating in $[0,t]$ and using the boundary condition of \eqref{Al1}, we get
$$ \int_{\mathbb{R}^{+}}|u(t,x)|^{2}dx \leq 2\int_{0}^{t}\int_{\mathbb{R}^{+}}f(\tau,x)u(\tau,x)dxd\tau, $$
showing \eqref{u e f} for smooth solutions. The result for the general case follows by density.
\end{proof}


Consider the space $$\widetilde{W}^{1,p}(0,T) = \{\varphi \in W^{1,p}(0,T); \varphi(0)=0\}, \ p \in [2, \infty]$$
and define the following linear operator $Q$
$$Q(u)(t):=q(t),$$
where $q(t)$ is defined by \eqref{q}. Here, we consider the following norm associated to $\widetilde{W}^{1,p}(0,T)$
$$\|Q(u)\|_{\widetilde{W}^{1,p}(0,T)} = \|q\|_{\widetilde{W}^{1,p}(0,T)}= \|q'\|_{L^{p}(0,T)}.$$
With this in hands, we have the following result.
\begin{lemma}\label{lss}Consider $\omega\in J$, defined by \eqref{jota}, and  $\varphi \in \widetilde{W}^{1,p}(0,T)$, for some $p \in [2, \infty],g \in C(0,T; L^{2}(\mathbb{R}^{+}))$. If the following assumption holds
\begin{equation}\label{g0}
\biggl|\int_{\mathbb{R}^{+}}^{}g(t,x)\omega(x)dx\biggr| \geq g_{0} > 0, \ \forall \ t \in [0,T],
\end{equation}
then there exist a unique function $f_{0}= \Gamma(\varphi) \in L^{p}(0,T)$, such that for $f (t, x) := f_0(t)g(t, x)$ the function $u := S(0,0, 0,f)$ solution of  \eqref{Al1} satisfies \eqref{cond. de con.2}. Additionally, the linear operator
\begin{equation}
\begin{array}{c c c c }
\Gamma : & \widetilde{W}^{1,p}(0,T)&  \longrightarrow & L^{p}(0,T) \\
 & \varphi & \longmapsto & \Gamma (\varphi )=f_0
\end{array}
\end{equation}
  is bounded. 
\end{lemma}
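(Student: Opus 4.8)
The plan is to reduce the statement to a linear Volterra fixed-point equation for $f_0$ and then invert the relevant operator by a Neumann series, which avoids any smallness hypothesis on $T$. First I would record that, since $g\in C(0,T;L^2(\mathbb{R}^+))$ and $\omega\in H^5(\mathbb{R}^+)\subset L^2(\mathbb{R}^+)$, the function $g_1(t):=\int_{\mathbb{R}^+}g(t,x)\omega(x)\,dx$ is continuous on $[0,T]$ and, by \eqref{g0}, satisfies $|g_1(t)|\ge g_0>0$; hence $1/g_1\in L^\infty(0,T)$ with $\|1/g_1\|_{L^\infty}\le 1/g_0$. Writing $f(t,x)=f_0(t)g(t,x)$, I apply Lemma \ref{1.1} with $f_1=f_0g\in L^p(0,T;L^2(\mathbb{R}^+))$, $f_2=0$ and $\mu=\nu=u_0=0$; this gives $q\in\widetilde{W}^{1,p}(0,T)$ with $q(0)=0$ (because $u_0=0$), together with the formula $q'(t)=f_0(t)g_1(t)+\int_{\mathbb{R}^+}u(t,x)(\alpha\omega'+\beta\omega'''-\omega''''')\,dx$. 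Since $q$ and $\varphi$ both vanish at $t=0$, the overdetermination condition $q=\varphi$ is equivalent to $q'=\varphi'$, which rearranges into the fixed-point equation $f_0=Af_0$, where $Af_0=\dfrac{\varphi'}{g_1}-Lf_0$ and $Lf_0(t):=\dfrac{1}{g_1(t)}\int_{\mathbb{R}^+}u(t,x)(\alpha\omega'+\beta\omega'''-\omega''''')\,dx$ with $u=S(0,0,0,f_0g)$. Because the solution map $f_0\mapsto u$ is linear, $L$ is a linear operator on $L^p(0,T)$, and solving the control problem amounts to inverting $I+L$.

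The heart of the argument is the estimate that makes $L$ a \emph{Volterra} operator. Applying the energy inequality \eqref{u e f} to $u=S(0,0,0,f_0g)$, together with Cauchy--Schwarz and a Gronwall-type step, gives $\|u(t,\cdot)\|_{L^2(\mathbb{R}^+)}\le\int_0^t\|f_0(\tau)g(\tau,\cdot)\|_{L^2(\mathbb{R}^+)}\,d\tau\le\|g\|_{C(0,T;L^2)}\int_0^t|f_0(\tau)|\,d\tau$, and therefore $|Lf_0(t)|\le M\int_0^t|f_0(\tau)|\,d\tau$ with $M:=g_0^{-1}\|g\|_{C(0,T;L^2)}\,(|\alpha|\|\omega'\|_{L^2}+|\beta|\|\omega'''\|_{L^2}+\|\omega'''''\|_{L^2})$. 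Iterating this pointwise bound yields $|L^nf_0(t)|\le M^n\int_0^t\frac{(t-\tau)^{n-1}}{(n-1)!}|f_0(\tau)|\,d\tau$, so by Young's convolution inequality $\|L^n\|_{L^p(0,T)\to L^p(0,T)}\le (MT)^n/n!$. Consequently $\sum_{n\ge0}\|L^n\|\le e^{MT}<\infty$, the Neumann series $(I+L)^{-1}=\sum_{n\ge0}(-L)^n$ converges in operator norm, and $I+L$ is boundedly invertible.

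Having inverted $I+L$, I define $\Gamma(\varphi):=(I+L)^{-1}(\varphi'/g_1)$, the unique solution of $f_0=Af_0$ and hence the unique control for which $u=S(0,0,0,f_0g)$ satisfies \eqref{cond. de con.2}. Boundedness and linearity of $\Gamma$ follow from those of $(I+L)^{-1}$ and of $\varphi\mapsto\varphi'/g_1$: since $\|\varphi'/g_1\|_{L^p}\le g_0^{-1}\|\varphi'\|_{L^p}=g_0^{-1}\|\varphi\|_{\widetilde{W}^{1,p}}$, we obtain $\|\Gamma(\varphi)\|_{L^p(0,T)}\le g_0^{-1}e^{MT}\|\varphi\|_{\widetilde{W}^{1,p}(0,T)}$. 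For the endpoint $p=\infty$ I would run the same scheme with $L^\infty(0,T)$ replaced by $C([0,T])$ as prescribed in the Remarks following Lemma \ref{1.1}; the pointwise Volterra bound and the estimate $\sup_t\int_0^t M^n(t-\tau)^{n-1}/(n-1)!\,d\tau\le (MT)^n/n!$ give the same decay of $\|L^n\|$.

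I expect the main obstacle to be the Volterra-decay step: one must carefully extract the clean pointwise bound $\|u(t,\cdot)\|_{L^2}\le\int_0^t\|f_0(\tau)g(\tau,\cdot)\|_{L^2}\,d\tau$ from \eqref{u e f}, and then verify that the iterated kernels assemble exactly into $(t-\tau)^{n-1}/(n-1)!$ so that the factorial beats the growth of $(MT)^n$. This causal structure is precisely what lets the lemma hold for arbitrary $T>0$ without any smallness assumption, in contrast to the nonlinear Theorem \ref{main1}, where the quadratic term $uu_x$ forces the constant $\gamma$.
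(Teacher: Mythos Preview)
Your proof is correct, and it follows the same reduction as the paper: both use Lemma \ref{1.1} (with $u_0=\mu=\nu=0$, $f_1=f_0g$, $f_2=0$) to rewrite the overdetermination condition as the fixed-point equation $f_0=Af_0$, and both extract from \eqref{u e f} the causal bound $\|u(t,\cdot)\|_{L^2(\mathbb{R}^+)}\lesssim\int_0^t|f_0(\tau)|\,d\tau$, which is the crux of the argument. The only difference is in how the Volterra structure is exploited to invert the operator on all of $[0,T]$. The paper equips $L^p(0,T)$ with the weighted norm $\|e^{-\gamma t}\,\cdot\,\|_{L^p}$ and shows that $A$ becomes a contraction for $\gamma$ large, then appeals to the bounded inverse theorem for $\Lambda$; you instead iterate the pointwise bound to obtain $\|L^n\|\le (MT)^n/n!$ and sum the Neumann series for $(I+L)^{-1}$. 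These are two standard, interchangeable devices for inverting a Volterra perturbation of the identity. Your route has the minor bonus of producing the explicit global bound $\|\Gamma\varphi\|_{L^p}\le g_0^{-1}e^{MT}\|\varphi'\|_{L^p}$, while the paper's weighted-norm computation is precisely what is recycled (with $\gamma=0$) in the refinement Lemma \ref{lss1} to get the sharper estimate $\|\Gamma\varphi\|_{L^p}\le (2/g_0)\|\varphi'\|_{L^p}$ under the smallness condition $c_0T\le p^{1/p}/2$ needed for Theorem \ref{main2}.
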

\begin{proof}
Consider the function 
$$G: L^{p}(0,T)  \longrightarrow L^{2}(0,T; L^{2}(\mathbb{R}^{+})) $$
defined by $$ f_{0} \longmapsto G(f_{0})= f_{0}g.$$ By the definition, $G$ is linear. Moreover, we have 
\begin{equation*}
\begin{split}
\|G(f_{0})\|^{2}_{L^{2}(0,T;L^{2}(\mathbb{R}^{+}))} \leq&\ \|g\|^{2}_{C(0,T;L^{2}(\mathbb{R}^{+}))}\|f_{0}\|^{2}_{L^{2}(0,T)}\\
\leq &\  T^{\frac{p-2}{p}}\|g\|^{2}_{C(0,T;L^{2}(\mathbb{R}^{+}))}\|f_{0}\|^{2}_{L^{p}(0,T)}.
\end{split}
\end{equation*}
Thus, 
\begin{equation}\label{1.36}
\|G(f_{0})\|_{L^{2}(0,T;L^{2}(\mathbb{R}^{+}))} \leq T^{\frac{p-2}{2p}}\|g\|_{C(0,T;L^{2}(\mathbb{R}^{+}))}\|f_{0}\|_{L^{p}(0,T)}.
\end{equation}
Consider the application
$$\Lambda= Q\circ S \circ G : L^{p}(0,T) \longrightarrow \widetilde{W}^{1,p}(0,T)$$
which one will be defined by $$
f_{0} \longmapsto \Lambda(f_{0})= \displaystyle \int_{\mathbb{R}^{+}}^{}u(t,x)\omega(x)dx,
$$
where  $u:= S(0,0, 0,f)$. Therefore, since $Q$, $S$ and $G$ are linear and bounded, we have that  $\Lambda$ is linear and bounded and have the following property
$$(\Lambda f_{0})(0)= \int_{\mathbb{R}^{+}}^{}u_{0}(x)\omega(x)dx=0$$
that is, $\Lambda$ is well-defined.

Introduce the operator $$\Lambda= A : L^{p}(0,T) \longrightarrow L^{p}(0,T) $$ by $$
f_{0} \longmapsto A(f_{0}) \in L^{p}(0,T),$$
where
$$
(Af_{0})(t)= \frac{\varphi'(t)}{g_{1}(t)} - \frac{1}{g_{1}(t)}\int_{\mathbb{R}^{+}}^{}u(t,x)(\alpha\omega' + \beta\omega''' - \omega''''')dx.
$$
Here, $u= S(0,0, 0,f)$ and $$g_{1}(t)= \int_{\mathbb{R}^{+}}g(t,x)\omega(x)dx,$$ for all  $ t \in [0,T]$. Observe that, using \eqref{q'} $\Lambda(f_{0})= \varphi$ if and only if $f_{0}= A(f_{0})$. 

Now we show that the operator $A$ is a contraction on $L^p(0, T )$, if we choose an
appropriate norm in this space. To show it let us split our prove in two cases. 

\vspace{0.2cm}
 
\noindent\textbf{Case one:} $2 \leq p < \infty $.

\vspace{0.2cm}

Let $f_{01}, f_{02} \in L^{p}(0,T)$, $u_{1}= (S\circ G)f_{01}$ and $u_{2}= (S\circ G)f_{02}$, so thanks to \eqref{u e f} we get 
\begin{equation}\label{123}
\|u_{1}(t,\cdot)- u_{2}(t,\cdot)\|_{L^{2}(\mathbb{R}^{+})} \leq 2\|g\|_{C(0,T; L^{2}(\mathbb{R}^{+}))}\|f_{01} - f_{02}\|_{L^{1}(0,t)}, \ \forall t \in [0,T].
\end{equation}
Consider $\gamma > 0$  and $t \in [0,T]$, using Hölder inequality, we have
\begin{equation*}
\begin{split}
 \biggl|e^{-\gamma t}\bigl((Af_{01})(t)- (Af_{02})(t)\bigr)\biggr|
\leq &\ \frac{e^{-\gamma t}}{|g_{1}(t)|}\int_{\mathbb{R}^{+}}^{}|(u_{1}(t,x) - u_{2}(t,x))(\alpha\omega' + \beta\omega''' - \omega''''')|dx
\\ 
\leq & \frac{e^{-\gamma t}}{g_{0}}\|\alpha \omega' + \beta\omega''' - \omega'''''\|_{L^{2}(\mathbb{R}^{+})}\|u_{1}(t,\cdot) - u_{2}(t,\cdot)\|_{L^{2}(\mathbb{R}^{+})}\\
\leq &  \frac{1}{g_{0}}\|\omega\|_{H^{5}(\mathbb{R}^{+})}e^{-\gamma t}\|u_{1}(t,\cdot)- u_{2}(t,\cdot)\|_{L^{2}(\mathbb{R}^{+})}.
\end{split}
\end{equation*}
Therefore, now, using \eqref{123}, yields that
\begin{equation*}
\begin{split}
\|e^{-\gamma t}(Af_{01}- Af_{02})\|_{L^{p}(0,T)}  \leq&\ \frac{2\|\omega\|_{H^{5}(\mathbb{R}^{+})}\|g\|_{C(0,T; L^{2}(\mathbb{R}^{+}))}}{g_{0}}\biggl(\int_{0}^{T}e^{-\gamma pt}\biggl(\int_{0}^{t}|f_{01}(\tau) - f_{02}(\tau)|d\tau \biggr)^{p}dt\biggr)^{\frac{1}{p}}\\
\leq& \ C\biggl(\int_{0}^{T}e^{-\gamma pt}\biggl(\int_{0}^{T}|f_{01}(\tau) - f_{02}(\tau)|d\tau \biggr)^{p}dt\biggr)^{\frac{1}{p}}.
\end{split}
\end{equation*}

Finally, using the last inequality for $p \in [2, \infty)$, such that $\frac{1}{p}+\frac{1}{p'}=1$, we have
\begin{equation}\label{Lp}
\begin{split}
\|e^{-\gamma t}(Af_{01}- Af_{02})\|_{L^{p}(0,T)}
\leq & \  c_0\biggl(\int_{0}^{T}e^{-\gamma pt}\biggl(\int_{0}^{T}|f_{01}(\tau) - f_{02}(\tau)|d\tau \biggr)^{p}dt\biggr)^{\frac{1}{p}}\\
& \leq c_{0}\left\|e^{-\gamma \tau}\left(f_{01}-f_{02}\right)\right\|_{L^{p}(0, T)}\left[\int_{0}^{T} e^{-p \gamma t}\left(\int_{0}^{t} e^{p^{\prime} \gamma \tau} \mathrm{d} \tau\right)^{p / p^{\prime}} \mathrm{d} t\right]^{1 / p} \\
& \leq \frac{c_{0} T^{1 / p}}{\left(p^{\prime} \gamma\right)^{1 / p^{\prime}}}\left\|e^{-\gamma t}\left(f_{01}-f_{02}\right)\right\|_{L^{p}(0, T)},
\end{split}
\end{equation}
where $ c_0 = c_0(\|\omega\|_{H^{5}(\mathbb{R}^{+})}, g_{0}, \|g\|_{C(0,T; L^{2}(\mathbb{R}^{+}))})$  is defined by 
\begin{equation}\label{c_0}
c_0:=\frac{2}{g_{0}}\|g\|_{C\left([0, T] ; L_{2}\left(\mathbb{R}^+\right)\right)}\left(|\alpha|\left\|\omega^{\prime}\right\|_{L_{2}\left(\mathbb{R}^+\right)}+|\beta|\left\|\omega^{\prime \prime \prime}\right\|_{L_{2}\left(\mathbb{R}^+\right)}+\left\|\omega^{\prime \prime \prime \prime \prime}\right\|_{L_{2}\left(\mathbb{R}^+\right)}\right).
\end{equation}

\vspace{0.2cm}
 
\noindent\textbf{Case two:} $p= \infty$.

\vspace{0.2cm}

In this case, we have 
\begin{equation}\label{Linfty}
\begin{split}
\left\|e^{-\gamma t}\left(A f_{01}-A f_{02}\right)\right\|_{L^{\infty}(0, T)} \leq & c_0 \sup _{t \in[0, T]} e^{-\gamma t}\left\|u_{1}(t, \cdot)-u_{2}(t, \cdot)\right\|_{L^{2}(\mathbb{R}^+)} \\
 \leq & \ c_0 \sup _{t \in[0, T]} e^{-\gamma t}\left\|f_{01}-f_{02}\right\|_{L^{1}(0, t)} \\ \leq &\ \frac{c_0}{\gamma}\left\|e^{-\gamma t}\left(f_{01}-f_{02}\right)\right\|_{L^{\infty}(0, T)},
\end{split}
\end{equation}
where $c_0 = c_0(T,p,\|\omega\|_{H^{5}(\mathbb{R}^{+})}, g_{0}, \|g\|_{C(0,T; L^{2}(\mathbb{R}^{+}))})$ is defined by \eqref{c_0}. 

Therefore,  in both cases for sufficiently large $\gamma$ the operator $A$ is a contraction and, therefore, for any $\varphi \in \widetilde{W}^{1,p}(0,T)$, there exists a unique $f_{0} \in L^{p}(0,T)$ such that $f_{0}= A(f_{0})$, or equivalently, $\varphi= \Lambda(f_{0})$. Thus, follows that  $\Lambda$ is invertible. Due to the Banach theorem its inverse $$\Gamma :L^{p}(0,T) \longmapsto \widetilde{W}^{1,p}(0,T)$$ is bounded. Particularly, 
\begin{equation}\label{1.49}
\|\Gamma \varphi\|_{L^{p}(0,T)} \leq C(T)\|\varphi'\|_{L^{p}(0,T)}.
\end{equation}
\end{proof}

For prove our second main result of this work we need  one refinement of Lemma \ref{lss}.

\begin{lemma}\label{lss1}
Under the hypothesis of Lemma \ref{lss}, if $c_{0} T \leq p^{1 / p} / 2$, $c_{0}$ given by \eqref{c_0}, and $p^{1 / p}=1$ for $p=+\infty$, we have the following estimate
\begin{equation}\label{Gamma}
\|\Gamma \varphi\|_{L_{p}(0, T)} \leq \frac{2}{g_{0}}\left\|\varphi^{\prime}\right\|_{L_{p}(0, T)},
\end{equation}
for the operator $ \Gamma : \widetilde{W}^{1,p}(0,T)\longmapsto L^{p}(0,T)$.
\end{lemma}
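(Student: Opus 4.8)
The plan is to revisit the contraction argument used to prove Lemma \ref{lss}, but this time to keep track of the contraction constant explicitly by estimating the operator $A$ directly in the unweighted $L^{p}(0,T)$ norm (that is, taking $\gamma=0$) rather than in the exponentially weighted norm. Recall from the proof of Lemma \ref{lss} that $f_{0}=\Gamma(\varphi)$ is characterised as the unique fixed point of the affine map $A$, that $A(0)(t)=\varphi'(t)/g_{1}(t)$ (since $S(0,0,0,0)=0$ kills the integral term), and that $A(f_{01})-A(f_{02})$ depends on $f_{01},f_{02}$ only through their difference, via the linear maps $G$ and $S$. The strategy is therefore: first show that under the smallness hypothesis $c_{0}T\le p^{1/p}/2$ the map $A$ is a contraction with factor $\theta\le 1/2$ in the plain $L^{p}(0,T)$ norm, and then read off \eqref{Gamma} from the fixed-point identity $f_{0}=A(f_{0})$ together with the bound on $A(0)$.

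For the contraction estimate I would start from the same pointwise bound as in Case one of Lemma \ref{lss}, namely
\[
\bigl|(Af_{01})(t)-(Af_{02})(t)\bigr|\le c_{0}\int_{0}^{t}|f_{01}(\tau)-f_{02}(\tau)|\,d\tau,
\]
which follows by combining \eqref{123} with the definition \eqref{c_0} of $c_{0}$ and the lower bound $|g_{1}(t)|\ge g_{0}$. Instead of inserting the exponential weight, I would now apply Hölder's inequality directly, $\int_{0}^{t}|f_{01}-f_{02}|\,d\tau\le t^{1/p'}\|f_{01}-f_{02}\|_{L^{p}(0,T)}$, and then raise to the $p$-th power and integrate in $t$. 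Since $\tfrac{1}{p}+\tfrac{1}{p'}=1$ gives $p/p'=p-1$, one computes $\int_{0}^{T}t^{p-1}\,dt=T^{p}/p$, so that
\[
\|Af_{01}-Af_{02}\|_{L^{p}(0,T)}\le \frac{c_{0}T}{p^{1/p}}\,\|f_{01}-f_{02}\|_{L^{p}(0,T)}.
\]
The hypothesis $c_{0}T\le p^{1/p}/2$ then yields contraction factor $\theta:=c_{0}T/p^{1/p}\le 1/2$. The case $p=\infty$ is handled in the same way, using $\|f_{01}-f_{02}\|_{L^{1}(0,t)}\le T\|f_{01}-f_{02}\|_{L^{\infty}(0,T)}$ and the convention $p^{1/p}=1$, again giving $\theta\le 1/2$.

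Finally, I would deduce \eqref{Gamma} from the fixed-point relation. Writing $f_{0}=\Gamma(\varphi)=A(f_{0})$ and using $A(0)(t)=\varphi'(t)/g_{1}(t)$ with $|g_{1}(t)|\ge g_{0}$,
\[
\|f_{0}\|_{L^{p}(0,T)}\le \|A(f_{0})-A(0)\|_{L^{p}(0,T)}+\|A(0)\|_{L^{p}(0,T)}\le \theta\|f_{0}\|_{L^{p}(0,T)}+\frac{1}{g_{0}}\|\varphi'\|_{L^{p}(0,T)}.
\]
Since $1-\theta\ge 1/2$, rearranging gives $\|f_{0}\|_{L^{p}(0,T)}\le \frac{1}{(1-\theta)g_{0}}\|\varphi'\|_{L^{p}(0,T)}\le \frac{2}{g_{0}}\|\varphi'\|_{L^{p}(0,T)}$, which is exactly \eqref{Gamma}. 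The only genuinely delicate point is the sharp bookkeeping in the contraction estimate, in particular the exponent computation $\int_{0}^{T}t^{p-1}\,dt=T^{p}/p$ that produces the precise factor $c_{0}T/p^{1/p}$; everything else is the standard a priori bound for the fixed point of a contraction.
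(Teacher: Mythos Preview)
Your proposal is correct and follows essentially the same approach as the paper: both set $\gamma=0$ in the contraction estimate from Lemma~\ref{lss}, obtain the sharp factor $c_{0}T/p^{1/p}$ via H\"older and the computation $\int_{0}^{T}t^{p-1}\,dt=T^{p}/p$, and then rearrange the fixed-point identity $f_{0}=A(f_{0})$ using $A(0)=\varphi'/g_{1}$ and $|g_{1}|\ge g_{0}$. The only cosmetic difference is that the paper writes the key inequality directly as $\|f_{0}-\varphi'/g_{1}\|_{L^{p}}\le \frac{c_{0}T}{p^{1/p}}\|f_{0}\|_{L^{p}}$, while you phrase it as the contraction bound applied to the pair $(f_{0},0)$; these are the same statement.
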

\begin{proof}
Since $f_{0}=A f_{0}=\Gamma \varphi$, taking $\gamma=0$, similar as we did in \eqref{Lp}, we get that
$$
\left\|f_{0}-\frac{\varphi^{\prime}}{g_{1}}\right\|_{L^{p}(0, T)} \leq c_{0}\left[\int_{0}^{T}\left(\int_{0}^{t}\left|f_{0}(\tau)\right| \mathrm{d} \tau\right)^{p} \mathrm{~d} t\right]^{1 / p} \leq \frac{c_{0} T}{p^{1 / p}}\left\|f_{0}\right\|_{L^{p}(0, T)},
$$
and in a way analogous to the one made in \eqref{Linfty} we also have
$$
\left\|f_{0}-\frac{\varphi^{\prime}}{g_{1}}\right\|_{L^{\infty}(0, T)} \leq c_{0} \int_{0}^{T}\left|f_{0}(\tau)\right| \mathrm{d} \tau \leq c_{0} T\left\|f_{0}\right\|_{L^{\infty}(0, T)}.
$$
Thus, for $p \in[2,+\infty]$, we get
$$
\| \Gamma \varphi \|_{L^{p}(0, T)} \leq \frac{1}{g_{0}}\ \| \varphi^{\prime} \|_{L^{p}(0, T)}+\frac{c_{0} T}{p^{1 / p}}\| \Gamma \varphi \|_{L^{p}(0, T)},
$$
and the estimate \eqref{Gamma} holds true.
\end{proof}

\section{Control results}\label{Sec3}

In this section the \textit{overdetermination control problem} is studied. Precisely we will give answers for some question left in the beginning of this work. Here, let us consider the full system 
\begin{equation}\label{2aa}
\left\lbrace
\begin{array}{llr}
u_{t} + \alpha u_{x} + \beta u_{xxx} - u_{xxxxx} + uu_{x}= f(t,x)& \mbox{in} \ [0,T]\times\mathbb{R}^{+}, \\ 
u(t,0)=\mu(t), u_{x}(t,0)= \nu(t) & \mbox{on} \ [0,T], \\
 u(0,x) = u_{0}(x) & \mbox{in} \ \mathbb{R}^{+}.
\end{array}\right. 
\end{equation} 
First, we prove that when we have the linear system associated to \eqref{2aa} the control problem with a integral overdetermination condition holds. After that, we are able to extend this result, by using the regularity in Bourgain spaces, to the nonlinear one. Finally, we give, under some hypothesis, a minimal time such that the solution of \eqref{2aa} satisfies \eqref{cond. de con.2}.

\subsection{Linear case} In this section let us present the following result. 
\begin{theorem}\label{caso linear} 
Let $T > 0$, $p \in [2, \infty]$, $u_{0} \in L^{2}(\mathbb{R}^{+})$, $ \mu \in (H^{\frac{2}{5}} \cap L^{p})(0,T)$ and $\nu \in (H^{\frac{1}{5}} \cap L^{p})(0,T).$ Consider $ g \in C(0,T; L^{2}(\mathbb{R}^{+}))$, $\omega \in  \mathcal{J}$, defined by \eqref{jota}, and $ \varphi \in W^{1,p}(0,T)$ such that
\begin{equation}\label{varphi+l}
\varphi(0)= \int_{\mathbb{R}^{+}}u_{0}(x)\omega(x)dx.
\end{equation}
Additionally,  if 
\begin{equation}
\biggl| \int_{\mathbb{R}^{+}}^{}g(t,x)\omega(x)dx\biggr| \geq g_{0} > 0, \ \forall t \in [0,T],
\end{equation}
then there exists a unique $f_{0} \in L^{p}(0,T)$ such that for $f (t, x) := f_0(t)g(t, x) +f_{2x} (t, x) $, with $ f_{2} \in L^{p}(0,T; L^{1}(\mathbb{R}^{+}))$ and $f_{2x} \in X^{0, -b_{0}}(Q^{+}_{T})$, the solution $u:= S(u_{0}, \mu, \nu, f_{0}g + f_{2x})$ of
\begin{equation}\label{2aaa}
\left\lbrace
\begin{array}{llr}
u_{t} + \alpha u_{x} + \beta u_{xxx} - u_{xxxxx} = f(t,x)& \mbox{in} \ [0,T]\times\mathbb{R}^{+}, \\ 
u(t,0)=\mu(t), u_{x}(t,0)= \nu(t) & \mbox{on} \ [0,T], \\
 u(0,x) = u_{0}(x) & \mbox{in} \ \mathbb{R}^{+},
\end{array}\right. 
\end{equation} 
satisfies  \eqref{cond. de con.2}.
\end{theorem}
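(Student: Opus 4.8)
The plan is to exploit the linearity of the solution map $S$ to split the solution into a ``data part'' and a ``control part,'' thereby reducing the statement to the already-established Lemma \ref{lss}. The observation driving everything is that $S$ is linear in the quadruple $(u_0,\mu,\nu,f)$, so the desired solution decomposes as $S(u_0,\mu,\nu, f_0 g + f_{2x}) = S(u_0,\mu,\nu, f_{2x}) + S(0,0,0, f_0 g)$.

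First I would set $v := S(u_0, \mu, \nu, f_{2x})$, the solution carrying all the prescribed data (initial datum $u_0$, boundary data $\mu,\nu$, source $f_{2x}$) but no control, and introduce
$$q_v(t) := \int_{\mathbb{R}^{+}} v(t,x)\omega(x)\,dx.$$
Since $f_{2x}\in X^{0,-b_0}(Q^+_T)$, the solution $v$ exists by Theorem \ref{BC1}, and Lemma \ref{1.1} (applied with $f_1 = 0$ and the given $f_2 \in L^p(0,T;L^1(\mathbb{R}^+))$, $\mu,\nu\in L^p(0,T)$) guarantees $q_v \in W^{1,p}(0,T)$. Evaluating at $t=0$ and using $v(0,\cdot)=u_0$ together with the compatibility hypothesis \eqref{varphi+l} yields $q_v(0) = \int_{\mathbb{R}^+} u_0(x)\omega(x)\,dx = \varphi(0)$.

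Next I would define $\widetilde{\varphi} := \varphi - q_v$. By the previous identity $\widetilde{\varphi}(0)=0$, so $\widetilde{\varphi}\in\widetilde{W}^{1,p}(0,T)$, which is exactly the space on which Lemma \ref{lss} operates. Applying Lemma \ref{lss} to $\widetilde{\varphi}$ produces a unique $f_0 = \Gamma(\widetilde{\varphi})\in L^p(0,T)$ for which $w := S(0,0,0,f_0 g)$ satisfies $\int_{\mathbb{R}^+} w(t,x)\omega(x)\,dx = \widetilde{\varphi}(t)$. Because $f_0 g \in L^2(0,T;L^2(\mathbb{R}^+)) \hookrightarrow X^{0,-b_0}(Q^+_T)$ (here $b_0>0$) and $f_{2x}\in X^{0,-b_0}(Q^+_T)$, the total source $f = f_0 g + f_{2x}$ lies in $X^{0,-b_0}(Q^+_T)$, so linearity of $S$ gives $u = v + w$ and hence
$$\int_{\mathbb{R}^{+}} u(t,x)\omega(x)\,dx = q_v(t) + \widetilde{\varphi}(t) = q_v(t) + \varphi(t) - q_v(t) = \varphi(t),$$
which is precisely \eqref{cond. de con.2}.

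For uniqueness I would argue that any other admissible control $\widehat{f}_0$ yields $\widehat{w}:=S(0,0,0,\widehat{f}_0 g)$ with $\int_{\mathbb{R}^+}\widehat{w}\,\omega\,dx = \varphi - q_v = \widetilde{\varphi}$, i.e. $\Lambda(\widehat{f}_0)=\Lambda(f_0)$; the injectivity of $\Lambda$ established in Lemma \ref{lss} then forces $\widehat{f}_0 = f_0$. I do not expect a genuine obstacle here, as the argument is a linear decomposition resting entirely on Lemma \ref{lss} and Lemma \ref{1.1}; the one point demanding care is the verification that $q_v(0)=\varphi(0)$, since this is exactly what places $\widetilde{\varphi}$ in $\widetilde{W}^{1,p}(0,T)$ and makes Lemma \ref{lss} applicable.
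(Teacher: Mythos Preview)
Your proof is correct and follows essentially the same route as the paper: split the solution linearly into a data part $v=S(u_0,\mu,\nu,f_{2x})$ and a control part $w=S(0,0,0,f_0g)$, use Lemma~\ref{1.1} and the compatibility condition \eqref{varphi+l} to place $\varphi-Q(v)$ in $\widetilde W^{1,p}(0,T)$, and then invoke Lemma~\ref{lss}. You additionally spell out the uniqueness argument via the injectivity of $\Lambda$, which the paper leaves implicit; note also that the paper's own proof has a harmless sign typo (it writes $v_1=S(u_0,\mu,\nu,-f_{2x})$), whereas your version is consistent with the theorem statement.
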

\begin{proof}
Pick $v_{1}= S(u_{0}, \mu, \nu, -f_{2x})$ solution of
\begin{equation*}
\left\lbrace
\begin{array}{llr}
v_{1t} + \alpha v_{1x} + \beta v_{1xxx} - v_{1xxxxx}= -f_{2x} & \mbox{in} \ Q^{+}_{T}, \\ 
v_{1}(t,0)=\mu(t),  v_{1x}(t,0)= \nu(t) & \mbox{on} \ [0,T], \\
 v_{1}(0,x) = u_{0}(x) & \mbox{in} \ \mathbb{R}^{+}.
\end{array}\right. 
\end{equation*}
Define the following function 
$$
\varphi_{1}= \varphi - Q(v_{1}): [0,T] \longrightarrow \mathbb{R}$$
by $$ \varphi_{1}(t)= \varphi(t) - \int_{\mathbb{R}^{+}}^{}v_{1}(t,x)\omega(x)dx.$$
Since $\varphi \in W^{1,p}(0,T),$ using Lemma \ref{1.1}  together with \eqref{varphi+l}, follows that $\varphi_{1} \in \widetilde{W}^{1,p}(0,T)$. Therefore, Lemma \ref{lss}, ensures that there exists a unique $\Gamma\varphi_{1}= f_{0} \in L^{p}(0,T)$ such that the solution $v_{2}:= S(0,0, 0, f_{0}g)$ of
\begin{equation*}
\left\lbrace
\begin{array}{llr}
v_{2t} + \alpha v_{2x} + \beta v_{2xxx} - v_{2xxxxx}= f_{0}g & \mbox{em} \ Q^{+}_{T}, \\ 
v_{2}(t,0)= 0, v_{2x}(t,0)= 0 & \mbox{em} \ [0,T], \\
v_{2}(0,x) = 0 & \mbox{em} \ \mathbb{R}^{+},
\end{array}\right. 
\end{equation*}
satisfies the following integral condition
$$
\int_{\mathbb{R}^{+}}^{}v_{2}(t,x)\omega(x)dx= \varphi_{1}(t), \  t \in [0,T].
$$

Thus, taking  $u= v_{1} + v_{2}:= S(u_{0},\mu, \nu, f_{0}g - f_{2x})$, we have $u$ solution of \eqref{2aaa} satisfying
\begin{equation*}
\begin{split}
\int_{\mathbb{R}^{+}}^{}u(t,x)\omega(x)dx=& \int_{\mathbb{R}^{+}}^{}v_{1}(t,x)\omega(x)dx + \int_{\mathbb{R}^{+}}^{}v_{2}(t,x)\omega(x)dx\\
=& \int_{\mathbb{R}^{+}}^{}v_{1}(t,x)\omega(x)dx + \varphi_{1}(t)\\
= & \int_{\mathbb{R}^{+}}^{}v_{1}(t,x)\omega(x)dx + \varphi(t) - \int_{\mathbb{R}^{+}}^{}v_{1}(t,x)\omega(x)dx\\
= & \varphi(t), 
\end{split}
\end{equation*}
for all $t\in[0,T]$, that is, \eqref{cond. de con.2} holds, showing the result.
\end{proof}

\subsection{Nonlinear case} We are in position to prove the first main result of this manuscript, that is, Theorem \ref{main1}.  Here,  is essential the estimates in Bourgain space proved by \cite{MC} and presented in the Section \ref{Sec1}.

\begin{proof} [Proof of Theorem \ref{main1}]
Let $u, v \in Z(Q^{+}_{T})$. The following estimate holds, using H\"older inequality, 
\begin{equation*}
\|u(t,\cdot)v(t,\cdot)\|_{L^{1}(\mathbb{R}^{+})} \leq \|u(t,\cdot)\|_{L^{2}(\mathbb{R}^{+})}\|v(t,\cdot)\|_{L^{2}(\mathbb{R}^{+})}, \ \forall t \in [0,T].
\end{equation*} 
So, we get
\begin{equation*}
\|uv\|_{C(0,T;L^{1}(\mathbb{R}^{+}))} \leq \|u\|_{C(0,T;L^{2}(\mathbb{R}^{+}))}\|v\|_{C(0,T;L^{2}(\mathbb{R}^{+}))}.
\end{equation*}
Since we have the following embedding $ \ C(0,T;L^{1}(\mathbb{R}^{+})) \hookrightarrow L^{p}(0,T;L^{1}(\mathbb{R}^{+}))$ for each $p \in [2, \infty]$, we can find 
\begin{equation*}
\|uv\|_{L^{p}(0,T;L^{1}(\mathbb{R}^{+}))} \leq T^{\frac{1}{p}}\|u\|_{C(0,T;L^{2}(\mathbb{R}^{+}))}\|v\|_{C(0,T;L^{2}(\mathbb{R}^{+}))},
\end{equation*}
or equivalently, 
\begin{equation}\label{norma de uv}
\|uv\|_{L^{p}(0,T;L^{1}(\mathbb{R}^{+}))} \leq T^{\frac{1}{p}}\|u\|_{Z(Q^{+}_{T})}\|v\|_{Z_{1}(Q^{+}_{T})},
\end{equation}
for any $ u,v \in Z(Q^{+}_{T})$.

Now, pick $f= f_{1} - f_{2x}$ in the  following system 
\begin{equation}\label{2aaaa}
\left\lbrace
\begin{array}{llr}
u_{t} + \alpha u_{x} + \beta u_{xxx} - u_{xxxxx} = f(t,x)& \mbox{in} \ [0,T]\times\mathbb{R}^{+}, \\ 
u(t,0)=\mu(t), u_{x}(t,0)= \nu(t) & \mbox{on} \ [0,T], \\
 u(0,x) = u_{0}(x) & \mbox{in} \ \mathbb{R}^{+}.
\end{array}\right. 
\end{equation} 
Consider so $f_{2}= \frac{v^{2}}{2}$, where $v \in Z(Q^{+}_{T})$ and $f_{1} \in L^{2}(0,T;L^{2}(\mathbb{R}^{+}))$. The estimate \eqref{dxv2} yields that $f_{2x}= vv_{x} \in X^{0,-b_{0}}(Q^{+}_{T})$, for some $b_{0} \in (0,\frac{1}{2})$. Moreover, thanks to  \eqref{norma de uv} we have that $f_{2} \in L^{p}(0,T;L^{1}(\mathbb{R}^{+})).$

On the space $Z(Q^{+}_{T})$ let us define the functional 
$\Theta: Z(Q^{+}_{T}) \longrightarrow  Z(Q^{+}_{T}) $
by
\begin{equation}\label{Theta}
u:=\Theta v = S\biggl(u_{0}, \mu, \nu, \Gamma\bigl(\varphi - Q(S(u_{0},\mu, \nu , - vv_{x}))\bigr)g - vv_{x}\biggr).
\end{equation}
Note that using Lemma \ref{lss} and Theorem \ref{caso linear}m the operator $\Theta$ is well-defined. 

Considering  $p=2$, thanks to \eqref{norma de u1}, the embedding $L^{2}(0,T;L^{2}(\mathbb{R}^{+}))\hookrightarrow X^{0, -b_{0}}(Q_{T}^{+})$, \eqref{1.36}, \eqref{1.49} and \eqref{norma de q'}, we get

\begin{equation*}
\begin{split}
 \|\Theta v\|_{Z(Q^{+}_{T})} \leq&
C\biggl(\|u_{0}\|_{L^{2}(\mathbb{R}^{+})} + \|\mu\|_{H^{\frac{2}{5}}(0,T)} + \|\nu\|_{H^{\frac{1}{5}}(0,T)}   \\
&+
\|\Gamma\bigl(\varphi - Q(S(u_{0},\mu, \nu , - vv_{x}))\bigr)g  - vv_{x}\|_{X^{0,-b_{0}}(Q^{+}_{T})} \biggr)\\
\leq &
C\biggl(\|u_{0}\|_{L^{2}(\mathbb{R}^{+})} + \|\mu\|_{H^{\frac{2}{5}}(0,T)} +  \|\nu\|_{H^{\frac{1}{5}}(0,T)} +  \|vv_{x}\|_{X^{0,-b_{0}}(Q^{+}_{T})} \\
& + 
\|\Gamma\bigl(\varphi - Q(S(u_{0},\mu, \nu , - vv_{x}))\bigr)g\|_{L^{2}(0,T; L^{2}(\mathbb{R}^{+}))}\biggr)\\
\leq&
C\biggl(\|u_{0}\|_{L^{2}(\mathbb{R}^{+})} + \|\mu\|_{H^{\frac{2}{5}}(0,T)} +  \|\nu\|_{H^{\frac{1}{5}}(0,T)} + \|vv_{x}\|_{X^{0,-b_{0}}(Q^{+}_{T})} 
 \\ 
&+
 \|g\|_{C(0,T; L^{2}(\mathbb{R}^{+}))}\|\bigl(\varphi - Q(S(u_{0},\mu, \nu , - vv_{x}))\bigr)\|_{\widetilde{W}^{1,2}(0,T)} \biggr)\\
\leq&
C(\|g\|_{C(0,T; L^{2}(\mathbb{R}^{+}))}, T)\biggl(\|u_{0}\|_{L^{2}(\mathbb{R}^{+})} + \|\mu\|_{H^{\frac{2}{5}}(0,T)} +  \|\nu\|_{H^{\frac{1}{5}}(0,T)} 
 \\
&+
 \bigl\|vv_{x}\bigr\|_{X^{0, -b_{0}}(Q_{T}^{+})} + \|\varphi'\|_{L^{2}(0,T)} + \|q'\|_{L^{2}(0,T)} \biggr)\\
\leq&
2C(|\alpha|,|\beta|,\|\omega\|_{H^{5}(\mathbb{R}^{+})}, \|g\|_{C(0,T; L^{2}(\mathbb{R}^{+}))}, T)\biggl( c_{1} + \bigl\|vv_{x}\bigr\|_{X^{0, -b_{0}}(Q_{T}^{+})} + \bigl\|v\bigr\|^2_{L^{2}(0,T;L^{1}(\mathbb{R}^{+}))}\biggr),
\end{split}
\end{equation*}
or equivalently,
\begin{equation*}
\|\Theta v\|_{Z(Q^{+}_{T})} \leq 2C(|\alpha|,|\beta|,\|\omega\|_{H^{5}(\mathbb{R}^{+})}, \|g\|_{C(0,T; L^{2}(\mathbb{R}^{+}))}, T)\biggl( c_{1} + \bigl\|vv_{x}\bigr\|_{X^{0, -b_{0}}(Q_{T}^{+})} + \bigl\|v^{2}\bigr\|_{L^{2}(0,T;L^{1}(\mathbb{R}^{+}))}\biggr).
\end{equation*}
Now, using the estimates \eqref{norma de uv} and \eqref{dxv2}, we have that
\begin{equation}\label{Q}
\|\Theta v\|_{Z(Q^{+}_{T})} \leq C\biggl( c_{1} + (T^{\frac{1}{2}} + 1)\|v\|^{2}_{Z(Q_{T}^{+})} \biggr).
\end{equation}
Here, $c_{1}>0$ is a constant depending such that  $$c_1:=\|u_{0}\|_{L^{2}(\mathbb{R}^{+})} + \|\mu\|_{H^{\frac{2}{5}}(0,T)} +  \|\nu\|_{H^{\frac{1}{5}}(0,T)} + \|\varphi'\|_{L^{2}(0,T)}$$ and $C > 0$ is a constant depending of  $C:=C(|\alpha|,|\beta|,\|\omega\|_{H^{5}(\mathbb{R}^{+})}, \|g\|_{C(0,T; L^{2}(\mathbb{R}^{+}))},T)$.

Similarly, using the linearity of the operator $S$, $Q$ and $\Gamma$, once again thanks to  \eqref{norma de uv}  and \eqref{dxv2}, we have 
\begin{equation}\label{Q-Q}
\| \Theta v_{1} - \Theta v_{2}\|_{Z(Q^{+}_{T})} \leq C(T^{\frac{1}{2}} + 1)(\|v_{1}\|_{Z(Q^{+}_{T})} + \|v_{2}\|_{Z(Q^{+}_{T})})\|v_{1} - v_{2}\|_{Z(Q^{+}_{T})}
\end{equation}

Finally, for fixed $c_{1} > 0$, take  $T_{0} > 0$ such that $$8C_{T_{0}}^{2}(T_{0}^{\frac{1}{2}} + 1)c_{1} \leq 1$$ then, for any $T \in (0, T_{0}]$, choice $$r \in \biggl[2C_{T}c_{1}, \frac{1}{\bigl(4C_{T}(T^{\frac{1}{2}} + 1)\bigr)}\biggr].$$ By the other hand, for fixed  $T > 0$  pick $$r= \frac{1}{\bigl(4C_{T}(T^{\frac{1}{2}} + 1)\bigr)}$$ and  $$c_{1} \leq \gamma= \frac{1}{\bigl(8C_{T}^{2}(T^{\frac{1}{2}} + 1)\bigr)}.$$ Therefore,
$$C_{T}c_{1} \leq \frac{r}{2}, \ \ C_{T}(T^{\frac{1}{2}} + 1)r \leq \frac{1}{4}.$$
So, $\Theta$ is a contraction on the ball $B(0,r) \subset Z(Q^{+}_{T})$. Theorem \eqref{caso linear} ensures that the unique fixed point $u=\Theta u \in Z(Q^{+}_{T})$  is a desired solution for $f_{0} := \Gamma\bigl(\varphi - Q(S(u_{0},\mu, \nu, -uu_{x}))\bigr) \in L^p(0,T)$. Thus, the result is achieved. 
\end{proof}

\subsection{Minimal time for the integral condition} We are able now to prove that the integral overdetermination condition \eqref{cond. de con.2a} follows true for a minimal time $T_0$. In order to do that, let us prove the second main result of this work, namely, Theorem \ref{main2}
\begin{proof}[Proof of Theorem \ref{main2}]
Without loss of generality, let us assume $T \leq 1.$ It is well-known that the Kawahara equation \eqref{2aa} enjoys the scaling symmetry: If u is a solution to \eqref{2aa}, $u_{\delta}(t,x)$ defined by  $$u_{\delta}(t,x) := \delta^{4}u(\delta^{5}t, \delta x), \quad \delta>0$$
is solution of \eqref{2aa} as well as.  Indeed, let $\delta= T^{\frac{1}{5}} \in (0,1),$ thus
$$u_{0\delta}(x):= \delta^{4}u_{0}(\delta x), \ \mu_{\delta}(t):= \delta^{4}\mu(\delta^{5}t), \ \nu_{\delta}(t):= \delta^{4}\nu(\delta^{5} t)$$
$$g_{\delta}(t,x):= \delta g(\delta^{5}t,\delta x), \ \omega_{\delta}(x):= \omega(\delta x), \ \varphi_{\delta}(t):= \delta^{4} \varphi(\delta^{5} t).$$
Therefore, if the par $(f_{0}, u)$ is solution of \eqref{2aa}, a straightforward calculation gives that
$$\{f_{0\delta}(t) := \delta^{8}f_{0}(\delta^{5}t), \ u_{\delta}(t,x) := \delta^{4}u(\delta^{5}t, \delta x)\}$$
is solution of 
\begin{equation}\label{2delta}
	\left\lbrace
	\begin{array}{llr}
		u_{\delta t} + \alpha \delta^{4}u_{\delta x} + \beta \delta^{2}u_{\delta xxx} - u_{\delta xxxxx} + u_{\delta}u_{\delta x}= f_{0\delta}(t)g_{\delta}(t,x) & \mbox{in} \ [0,1]\times\mathbb{R}^{+}, \\ 
		u_{\delta}(t,0)=\mu_{\delta}(t), \ u_{\delta x}(t,0)= \nu_{\delta}(t) & \mbox{on} \ [0,1], \\
		u_{\delta}(0,x) = u_{0\delta}(x) & \mbox{in} \ \mathbb{R}^{+}.
	\end{array}\right. 
\end{equation} 
Additionally, we have that $(f_{0}, u)$satisfies \eqref{cond. de con.2} if and only if  $(f_{0\delta}(t) , u_{\delta}(t,x)) $ satisfies the following integral condition
\begin{equation}\label{cond. de con.2 delta}
	\int_{\mathbb{R}^{+}}^{}u_{\delta}(t,x)\omega_{\delta}(x)dx= \varphi_{\delta}(t), \ t \in [0,1].
\end{equation}
Now, using the change of variables theorem and the definition of $\delta$, we verify that
$$\|u_{0\delta}\|_{L^{2}(\mathbb{R}^{+})}= \delta^{\frac{1}{2}}\delta^{4}\|u_{0}\|_{L^{2}(\mathbb{R}^{+})}\leq \delta^{\frac{1}{2}}\|u_{0}\|_{L^{2}(\mathbb{R}^{+})}$$
and
$$\|\varphi'_{\delta}\|_{L^{2}(0,1)}= \delta^{\frac{1}{2}}\delta^{11}\|\varphi_{\delta}\|_{L^{2}(0,T)}\leq \delta^{\frac{1}{2}}\|\varphi_{\delta}\|_{L^{2}(0,T)}.$$
Thus, we have that 
$$c_1(\delta):= \|u_{0\delta}\|_{L^{2}(\mathbb{R}^{+})} +  \|\varphi'_{\delta}\|_{L^{2}(0,1)} + \|\mu_{\delta}\|_{H^{\frac{2}{5}}(0,1)} +  \|\nu_{\delta}\|_{H^{\frac{1}{5}}(0,1)} \leq \delta^{\frac{1}{2}}c_1.$$
Moreover,
$$\|g_{\delta}\|_{C([0,1]; L^{2}(\mathbb{R}^{+}))}\leq \delta^{\frac{1}{2}}\|g\|_{C([0,T]; L^{2}(\mathbb{R}^{+}))},$$
$$ \biggl|\int_{\mathbb{R}^{+}}^{}g_{\delta}(t,x)\omega_{\delta}dx\biggr| \geq g_{0}, \ \forall t \in [0,1],$$
$$\|\omega_{\delta}'\|_{L^{2}(\mathbb{R}^{+})} \leq \delta^{\frac{1}{2}} \|\omega'\|_{L^{2}(\mathbb{R}^{+})},$$
$$\|\omega_{\delta}'''\|_{L^{2}(\mathbb{R}^{+})} \leq \delta^{\frac{5}{2}}\|\omega'''\|_{L^{2}(\mathbb{R}^{+})}$$
and $$\|\omega_{\delta}'''''\|_{L^{2}(\mathbb{R}^{+})} \leq \delta^{\frac{9}{2}}\|\omega''''''\|_{L^{2}(\mathbb{R}^{+})}.$$
So, as we want that $c_{0\delta}$ be one corresponding to $c_0$, which was defined by \eqref{c_0}, therefore,  
$$c_{0\delta} \leq \delta^{5}c_{0}.$$

Pick $\delta_{0}= (2c_{0})^{-1/5},$ so for $0 < \delta \leq \delta_{0}$ we can apply Lemma \eqref{lss1} and according to \eqref{Gamma}, the corresponding operator to $\Gamma$, which one will be called of $\gamma_{\delta}$  satisfies 
\begin{equation}\label{Gamma_delta}
\|\Gamma_{\delta}\varphi_{\delta}\|_{L^{p}(0,1)} \leq \frac{2}{g_{0}}\|\varphi'_{\delta}\|_{L^{p}(0,1)}.	
\end{equation}
Therefore, for $\Theta_{\delta}$ defined in the same way as  in \eqref{Theta} iand using the, similarly as in \eqref{Q} and \eqref{Q-Q}, however, now, using \eqref{Gamma_delta} instead of \eqref{1.49}, we have 
$$
	\|\Theta_{\delta} v_{\delta}\|_{Z(Q^{+}_{1})} \leq C\biggl( \delta^{\frac{1}{2}}c_1 + (T^{\frac{1}{2}} + 1)\|v_{\delta}\|^{2}_{Z(Q^{+}_{1})}\biggr)
$$
and 
$$
	\| \Theta_{\delta} v_{1_{\delta}} - \Theta_{\delta} v_{2_{\delta}}\|_{Z(Q^{+}_{1})} \leq C(T^{\frac{1}{2}} + 1)\bigl(\|v_{1_{\delta}}\|_{Z(Q^{+}_{1}))} + \|v_{2_{\delta}}\|_{Z(Q^{+}_{1})})\|v_{1_{\delta}} - v_{2_{\delta}}\|_{Z(Q^{+}_{1})},
$$
where the constant $C$ is uniform with respect to $0 < \delta \leq \delta_{0}.$ Taking $\delta_{0}$ small enough, if necessary, in order to satisfies the following inequality $$\delta_{0}^{\frac{1}{2}}c_1 \leq \frac{1}{8c^{2}(T^{\frac{1}{2}} + 1)},$$ so using the same arguments as done in Theorem \ref{main1}, the operator $\Theta_{\delta}$ becomes, at least, locally, a contraction on a certain ball. Lastly, taking the time $T_{0}$ defined by $T_0:= \delta_{0}^{5}$, and if $T \leq T_0$ we have that \eqref{cond. de con.2} holds true, showing so the result.
\end{proof}

\subsection{An exact controllability result}
The goal of this subsection is to prove the Corollary \ref{teocontrol}, showing that if the overdetermination condition is verified, for given any initial data $u_0$ and final data $u_T$ the mass \eqref{mass} of the system \eqref{2_1} is reached on the time $T$.

\begin{proof}[Proof of Corollary \ref{teocontrol}]
Thanks to the Theorem \ref{main1} with $\mu=v=0$, there exist $f_{0} \in L^{p}(0,T)$ and a unique solution $u$ of  \eqref{2_1} such that
\begin{equation}\label{eqq2}
\varphi(t)=\int_{\R^+} u(t,x)\omega(x)dx, \quad t \in [0,T]. 
\end{equation}
On the other hand, we know that $\omega$ defined a measure in $\R^+$ given by 
\begin{equation*}
\eta(E)=\int_{E} w(x)dx,
\end{equation*}
for all Lebesgue measure set  $E$ of $\R^+$ and 
\begin{equation*}
\int_{\R^+} f d\eta =  \int_{\R^+} f(x) \omega(x)dx,
\end{equation*}
for all measurable function $f$ in $\R^+$. Hence, from \eqref{eqq1} and \eqref{eqq2}, we conclude that 
\begin{equation*}
[u(T)]= \int_{\R^+} u(T)d\eta = \int_{\R^+} u(T,x)\omega(x)dx=\int_{\R^+} u_T(x)\omega(x)dx=\int_{\R^+} u_Td \eta = [u_T],
\end{equation*}
and the corollary is achieved.
\end{proof}

\section{Further comments} \label{Sec4}
This work deals with the internal controllability problem with an integral overdetermination condition on unbounded domains. Precisely,  we consider the higher order KdV type equation, so-called, Kawahara equation on the right half-line  
\begin{equation}\label{2a}
 \left\lbrace \begin{array}{llr} u_{t} + \alpha u_{x} + \beta u_{xxx} - u_{xxxxx} + uu_{x}= f(t,x)& \mbox{in} \ [0,T]\times\mathbb{R}^{+}, \\  u(t,0)=\mu(t), \ u_{x}(t,0)= \nu(t) & \mbox{on} \ [0,T], \\  u(0,x) = u_{0}(x) & \mbox{in} \ \mathbb{R}^{+}, 
 \end{array}\right. 
  \end{equation} 
  where $f:=f_0(t)g(x,t)$, with $f_0$ as a control input. In this case, we prove that given functions $u_0$, $\mu$, $\nu$ and $g$, the following integral overdetermination condition  
  \begin{equation}\label{cond. de con.2a} 
  \int_{\mathbb{R}^{+}}^{}u(t,x)\omega(x)dx= \varphi(t), \ t \in [0,T], 
  \end{equation} 
  holds. Additionally, that condition can be verified for a small time $T_0$. These points answer the previous questions introduced in \cite{CaGo} and extend for others domains the results of \cite{CaSo}.

\subsection{Comments about the main results} Let us give some remarks in order with respect to the generality of this manuscript.
 \begin{itemize} 
 \item Theorems \ref{main1} and \ref{main2} can be obtained for more general nonlinearity $u^2u_x$. In fact, this is possible due the result of Cavalcante and Kawak \cite{MC1} that showed the following:
 \begin{theorem}  The following estimates holds.
 \begin{itemize}  \item[a)]For $-1 / 4 \leq s$, there exists $b=b(s)<1 / 2$ such that for all $\alpha>1 / 2$, we have $$ \left\|\partial_{x}(u v w)\right\|_{X^{s,-b}} \lesssim\|u\|_{X^{s, b} \cap D^{\alpha}}\|v\|_{X^{s, b} \cap D^{\alpha}}\|w\|_{X^{s, b} \cap D^{\alpha}}. $$ \item[b)]  For $-1 / 4 \leq s \leq 0$, there exists $b=b(s)<1 / 2$ such that for all $\alpha>1 / 2$, we have $$ \left\|\partial_{x}(u v w)\right\|_{Y^{s,-b}} \lesssim\|u\|_{X^{s, b} \cap D^{\alpha}}\|v\|_{X^{s, b} \cap D^{\alpha}}\|w\|_{X^{s, b} \cap D^{\alpha}}. $$ \end{itemize} 
  \end{theorem}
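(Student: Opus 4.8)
The plan is to establish a) and b) by the now-standard Fourier-restriction (Bourgain space) machinery for multilinear estimates, adapted to the fifth-order dispersion relation $\tau=\xi^5$, running in close parallel with the bilinear estimate of Proposition \ref{pdxv}. I will concentrate on a); part b) follows from the same analysis because on the region where the output modulation $\langle\tau-\xi^5\rangle$ is small one has $\langle\tau\rangle\sim\langle\xi\rangle^{5}$, so the weight $\langle\tau\rangle^{s/5}$ defining $Y^{s,-b}$ is comparable to $\langle\xi\rangle^{s}$, while on the complementary region the modulation weight dominates; the restriction $s\le 0$ in b) is exactly what makes this comparison go in the favorable direction. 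First I would \emph{dualize}: writing $f_j(\tau_j,\xi_j)=\langle\xi_j\rangle^{s}\langle\tau_j-\xi_j^5\rangle^{b}|\widetilde{u}_j|$ for $j=1,2,3$ and pairing $\partial_x(uvw)$ against a test function $\phi$ with $\|\phi\|_{X^{-s,b}}\le 1$, set $h=\langle\xi\rangle^{s}\langle\tau-\xi^5\rangle^{-b}|\widetilde{\phi}|$ so that $\|h\|_{L^2}=\|\phi\|_{X^{-s,b}}$. By Plancherel the claim reduces to the quadrilinear multiplier bound
\[
\int_{*}\frac{|\xi|\,\langle\xi\rangle^{s}\,h(\tau,\xi)\,\prod_{j=1}^{3} f_j(\tau_j,\xi_j)}{\langle\tau-\xi^5\rangle^{b}\,\prod_{j=1}^{3}\langle\xi_j\rangle^{s}\langle\tau_j-\xi_j^5\rangle^{b}}\;\lesssim\;\|h\|_{L^2}\prod_{j=1}^{3}\|f_j\|_{L^2},
\]
where $\int_{*}$ denotes integration over the hyperplane $\xi=\xi_1+\xi_2+\xi_3$, $\tau=\tau_1+\tau_2+\tau_3$.

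The decisive algebraic input is the resonance identity
\[
(\tau-\xi^5)-\sum_{j=1}^{3}(\tau_j-\xi_j^5)=\sum_{j=1}^{3}\xi_j^5-\Big(\sum_{j=1}^{3}\xi_j\Big)^5=:H(\xi_1,\xi_2,\xi_3),
\]
together with the factorization $H=-(\xi_1+\xi_2)(\xi_2+\xi_3)(\xi_3+\xi_1)\,Q(\xi_1,\xi_2,\xi_3)$, where $Q$ is a positive-definite quadratic symbol (this follows since each pairwise sum $\xi_i+\xi_j$ visibly annihilates $H$). Consequently $\max(L_0,L_1,L_2,L_3)\gtrsim|H|$, with $L_0=\langle\tau-\xi^5\rangle$ and $L_j=\langle\tau_j-\xi_j^5\rangle$; in the generic (non-resonant) configuration, where no pairwise sum is small, $|H|\sim N_{\max}^{5}$. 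This strong smoothing is the source of all the gain: it is far more than enough to absorb both the derivative loss $|\xi|\lesssim N_{\max}$ and the negative regularity, all the way down to $s=-1/4$.

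Next I would perform a Littlewood--Paley/dyadic decomposition, setting $\langle\xi_j\rangle\sim N_j$, $\langle\xi\rangle\sim N_0$ and $L_j$ dyadic, and split into the standard frequency configurations ordered by the $N_j$ (high-high-high, high-high-low, high-low-low, and so on). In each configuration one distributes the gain $|H|^{b}$ extracted from the largest modulation among the four factors and closes the estimate using the $L^4_{t,x}$ and $L^6_{t,x}$ Strichartz estimates and the bilinear/Kato-smoothing refinements for the unitary group $e^{t\partial_x^5}$, combined with Cauchy--Schwarz in the localized variables; the dyadic pieces are then summed geometrically. The admissible exponent $b=b(s)<1/2$ is pinned down precisely here: $b$ must be close enough to $1/2$ that the modulation weights $L_j^{-b}$ are summable, yet strictly below $1/2$ so that the available smoothing $|H|^{2b}$ transferred from $\langle\tau-\xi^5\rangle$ still dominates the derivative loss in the non-resonant regime.

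The main obstacle I expect lies in the \emph{(near-)resonant and low-frequency interactions}: when some pairwise sum $\xi_i+\xi_j$ is small the factor $H$ degenerates and no modulation gain is available, and — closely related — when one input frequency is $O(1)$ the dispersive smoothing is lost. This is exactly what forces the auxiliary low-frequency space $D^{\alpha}$ with $\alpha>1/2$ onto the right-hand side: on the region $|\xi_j|\lesssim 1$ the weight $\langle\tau_j\rangle^{\alpha}$ controls the corresponding factor in $L^{\infty}_tL^{2}_x$ by Sobolev embedding in the time variable, which compensates for the missing smoothing. I would therefore isolate these degenerate regions, estimate the offending resonant or low-frequency factor through its $D^{\alpha}$ norm and the remaining factors by Strichartz, and verify that the endpoint $s=-1/4$ still closes. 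The overall skeleton mirrors Proposition \ref{pdxv}, but the trilinear resonance function and the additional factor demand a noticeably finer case analysis than the bilinear case.
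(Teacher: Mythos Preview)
The paper does not prove this theorem. It is stated in Section~\ref{Sec4} (Further comments) solely as a quotation of a result of Cavalcante and Kwak \cite{MC1}, invoked to explain why the main theorems extend to the nonlinearity $u^{2}u_{x}$; no argument, sketch, or even reference to a specific proof step appears in the manuscript. Consequently there is no ``paper's own proof'' to compare your proposal against.

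That said, your plan is a faithful outline of the method actually used in \cite{MC1} (and in the bilinear precursor \cite{MC}): dualization to a quadrilinear multiplier integral, the resonance identity for $\sum \xi_j^{5}-(\sum\xi_j)^{5}$, dyadic decomposition in frequency and modulation, Strichartz/bilinear refinements for $e^{t\partial_x^5}$, and the use of the low-frequency space $D^{\alpha}$ to handle the regions where dispersive smoothing is absent. One factual caution: the factorization you wrote, $H=-(\xi_1+\xi_2)(\xi_2+\xi_3)(\xi_3+\xi_1)\,Q$, is not correct as stated, since $\xi_1+\xi_2=0$ does \emph{not} force $\xi_1^{5}+\xi_2^{5}+\xi_3^{5}=(\xi_1+\xi_2+\xi_3)^{5}$; the trilinear resonance function for the fifth-order symbol does not vanish on those hyperplanes in general. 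The genuine near-resonant sets and the correct lower bound for $|H|$ require a more careful case analysis than in the cubic-dispersion (KdV) setting. This does not change the overall architecture you describe, but it is precisely the place where the ``noticeably finer case analysis'' you anticipate has to be carried out, and your sketch as written would need to be repaired there before it could be called a proof.
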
 
  Thus, Theorems \ref{main1} and \ref{main2} remain valid for $u^2u_x$, however, for sake of simplicity, we consider only the nonlinearity as $uu_x$. 
   \vspace{0.1cm}
   \item Due to the boundary traces defined in \cite[Theorems 1.1 and 1.2]{MC}, the regularities of the functions involved in this manuscript are sharps. 
    \vspace{0.1cm}
   \item The results presented in this manuscript are still valid when we consider the following domains: the real line ($\mathbb{R}$) and the left half-line ($\mathbb{R}^-$). Precisely, let us consider the following systems 
\begin{equation}\label{1}
\left\lbrace
\begin{array}{llr}
u_{t} + \alpha u_{x} + \beta u_{xxx} - u_{xxxxx} + uu_{x}= f_{0}(t)g(t,x) & \mbox{in} \ [0,T]\times\mathbb{R}, \\ 
 u(0,x) = u_{0}(x) & \mbox{on} \ \mathbb{R}.
\end{array}\right. 
\end{equation} 
and
\begin{equation}\label{3}
\left\lbrace
\begin{array}{llr}
u_{t} + \alpha u_{x} + \beta u_{xxx} - u_{xxxxx} + uu_{x}= f_{0}(t)g(t,x) & \mbox{in} \ [0,T]\times\mathbb{R}^{-}, \\ 
u(t,0)=\mu(t), \ u_{x}(t,0)= \nu(t) , \  u_{xx}(t,0)= h(t) & \mbox{on} \ [0,T], \\
u(0,x) = u_{0}(x) & \mbox{in} \ \mathbb{R}^{-}.
\end{array}\right. 
\end{equation} 
For given $T > 0$, $\varphi$, $\omega$ and $\omega^-$, consider the following integral conditions
\begin{equation}\label{cond. de con.1}
\int_{\mathbb{R}}^{}u(t,x)\omega(x)dx= \varphi(t), t \in [0,T]
\end{equation}
and 
\begin{equation}\label{cond. de con.3}
\int_{\mathbb{R}^{-}}^{}u(t,x)\omega^-(x)dx= \varphi(t), t \in [0,T].
\end{equation}
Thus, the next two theorems give us answers for the Problems $\mathcal{A}$ and $\mathcal{B}$, presented in the beginning of the manuscript, for real line and left half-line, respectively.
\begin{theorem}\label{main3} Let $T > 0$ and $p \in [2,\infty]$. Consider $u_{0} \in L^{2}(\mathbb{R})$ and $\varphi \in W^{1,p}(0,T)$. Additionally, let  $g \in C(0,T; L^{2}(\mathbb{R}))$ and  $\omega \in H^{5}(\mathbb{R})$ be a fixed function
 satisfying
$$
\varphi(0)= \int_{\mathbb{R}}u_{0}(x)\omega(x)dx
$$
and
$$
\biggl|\int_{\mathbb{R}}g(t,x)\omega(x)dx\biggr| \geq g_{0} > 0, \ \forall t \in [0,T],
$$
where $g_{0}$ is a constant. Then, for each $T > 0$ fixed, there exists a constant $\gamma > 0$ such that if  $c_{1}= \|u_{0}\|_{L^{2}(\mathbb{R}} + \|\varphi'\|_{L^{2}(0,T)} \leq \gamma,$ we can find a unique control input $f_{0} \in L^{p}(0,T)$ and a unique solution $u$ of  \eqref{1} satisfying  \eqref{cond. de con.1}.
\end{theorem}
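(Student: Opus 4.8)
The plan is to transport the whole machinery used for the right half-line (Theorems \ref{caso linear} and \ref{main1}) to the whole line, where in fact every step simplifies because there is no boundary to account for. First I would record the linear Cauchy theory for
\[u_t + \alpha u_x + \beta u_{xxx} - u_{xxxxx} = f \quad \text{on} \quad [0,T]\times\mathbb{R},\]
with $u_0 \in L^2(\mathbb{R})$ and $f \in X^{0,-b_0}$, obtaining a solution operator $S(u_0,f)$ and an energy bound analogous to \eqref{norma de u1} but \emph{without} the boundary terms $\|\mu\|,\|\nu\|$. This linear well-posedness, together with the bilinear estimate $\|\partial_x(uv)\|_{X^{0,-b_0}} \lesssim \|u\|_{Z}\|v\|_{Z}$ (the classical Cui--Tao type estimate on $\mathbb{R}$, cited in the historical background, rather than the more delicate half-line version of \cite{MC}), is the only analytic ingredient that must be imported; everything downstream is algebraic.

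Second I would prove the whole-line analogue of Lemma \ref{1.1}. Setting $q(t) = \int_{\mathbb{R}} u(t,x)\omega(x)\,dx$ and testing \eqref{1} against $\psi\omega$ with $\psi \in C_0^\infty(0,T)$, every boundary contribution now vanishes because $\omega \in H^5(\mathbb{R})$ and $u$ together with its derivatives decays at $\pm\infty$; this is exactly why the theorem needs only $\omega \in H^5(\mathbb{R})$ and no trace conditions. One obtains
\[q'(t) = \int_{\mathbb{R}} f_1(t,x)\omega(x)\,dx - \int_{\mathbb{R}} f_2(t,x)\omega'(x)\,dx + \int_{\mathbb{R}} u(t,x)\bigl(\alpha\omega'(x) + \beta\omega'''(x) - \omega'''''(x)\bigr)\,dx,\]
and the $L^p(0,T)$ bound for $q'$ follows precisely as before, via Hölder's inequality and $H^1(\mathbb{R}) \hookrightarrow L^\infty(\mathbb{R}) \cap C(\mathbb{R})$. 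In the same spirit the energy identity \eqref{u e f} carries over verbatim, with no surviving boundary terms, giving $\|u(t)\|_{L^2(\mathbb{R})}^2 \leq 2\int_0^t \int_{\mathbb{R}} f u \, dx\,d\tau$ for $u = S(0,f)$.

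Third I would reproduce Lemma \ref{lss}: the operator $\Lambda = Q \circ S \circ G$ sending $f_0 \mapsto \int_{\mathbb{R}} u(\cdot,x)\omega(x)\,dx$, with $u = S(0, f_0 g)$, is shown invertible by recasting $\Lambda(f_0) = \varphi$ as a fixed point $f_0 = A(f_0)$, where $A$ divides by $g_1(t) = \int_{\mathbb{R}} g(t,x)\omega(x)\,dx$; the hypothesis $|g_1| \geq g_0 > 0$ makes $A$ a contraction in the weighted norm $e^{-\gamma t}$ for $\gamma$ large, and Banach's theorem yields a bounded inverse $\Gamma$. With $\Gamma$ in hand, the linear control result (the whole-line analogue of Theorem \ref{caso linear}) follows by the same splitting $u = v_1 + v_2$, with $v_1 = S(u_0, -f_{2x})$ and $v_2 = S(0, f_0 g)$ for $f_0 = \Gamma(\varphi - Q(v_1))$; the compatibility $\varphi(0) = \int_{\mathbb{R}} u_0\omega\,dx$ is exactly what guarantees $\varphi - Q(v_1) \in \widetilde{W}^{1,p}(0,T)$.

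Finally, for the nonlinear problem I would close the loop with the contraction map $\Theta v = S\bigl(u_0, \Gamma(\varphi - Q(S(u_0,-vv_x)))g - vv_x\bigr)$, as in \eqref{Theta}, taking $f_2 = v^2/2$. Controlling $\|vv_x\|_{X^{0,-b_0}}$ by the bilinear estimate and $\|v^2\|_{L^p(0,T;L^1(\mathbb{R}))}$ by \eqref{norma de uv}, one recovers the quadratic bound $\|\Theta v\|_Z \leq C(c_1 + (T^{1/2}+1)\|v\|_Z^2)$ and a matching Lipschitz estimate, so $\Theta$ is a contraction on a small ball once $c_1 \leq \gamma$ for $\gamma$ small, its fixed point being the solution for $f_0 = \Gamma(\varphi - Q(S(u_0, -uu_x)))$. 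I expect no genuine obstacle beyond importing the correct whole-line linear theory and bilinear estimate; the only real subtlety is bookkeeping, namely verifying that deleting the boundary data $\mu,\nu$ and the trace conditions on $\omega$ breaks none of the estimates, which it does not precisely because those ingredients only ever fed boundary pieces that are now identically absent.
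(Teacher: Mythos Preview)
Your proposal is correct and follows essentially the same approach the paper indicates: the manuscript does not give a separate detailed proof of this theorem but explicitly states that the results ``are still valid when we consider the following domains: the real line ($\mathbb{R}$) and the left half-line ($\mathbb{R}^-$),'' relying on the same machinery (Lemmas \ref{1.1}, \ref{lss} and Theorem \ref{caso linear}) with the boundary terms dropped and $\omega \in H^5(\mathbb{R})$ in place of the trace-constrained set $\mathcal{J}$. Your identification of the only nontrivial imports---the whole-line linear well-posedness and the classical bilinear estimate on $\mathbb{R}$---matches precisely what the paper's remarks in Section \ref{Sec4} presuppose.
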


\begin{theorem}\label{main4} Let $T > 0$ and $p \in [2,\infty]$. Consider $\mu \in H^{\frac{2}{5}}(0,T) \cap L^{p}(0,T)$, $\nu \in H^{\frac{1}{5}}(0,T) \cap L^{p}(0,T)$, $h\in L^p(0,T)$, $u_{0} \in L^{2}(\mathbb{R}^{-})$ and $\varphi \in W^{1,p}(0,T)$. Additionally, let  $g \in C(0,T; L^{2}(\mathbb{R}^{-}))$ and  $\omega^-$ be a fixed function which belongs to the following set 
\begin{equation}\label{jota1}
\mathcal{J}= \{\omega \in H^{5}(\mathbb{R}^-): \omega(0)= \omega'(0)=0 \}
\end{equation}
 satisfying
$$
\varphi(0)= \int_{\mathbb{R}^{-}}u_{0}(x)\omega^-(x)dx
$$
and
$$
\biggl|\int_{\mathbb{R}^{-}}g(t,x)\omega^-(x)dx\biggr| \geq g_{0} > 0, \ \forall t \in [0,T],
$$
where $g_{0}$ is a constant. Then, for each $T > 0$ fixed, there exists a constant $\gamma > 0$ such that if 
$$c_{1}= \|u_{0}\|_{L^{2}(\mathbb{R}^{-})} + \|\mu\|_{H^{\frac{2}{5}}(0,T)} +  \|\nu\|_{H^{\frac{1}{5}}(0,T)} + \|h\|_{L^2(0,T)}+\|\varphi'\|_{L^{2}(0,T)} \leq \gamma,$$ we can find a unique control input $f_{0} \in L^{p}(0,T)$ and a unique solution $u$ of  \eqref{3} satisfying  \eqref{cond. de con.3}.
\end{theorem}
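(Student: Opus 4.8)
The plan is to mirror, essentially line by line, the argument that established Theorem \ref{main1}, transporting every auxiliary result from the right half-line to $\mathbb{R}^-$; the only genuinely new bookkeeping occurs in the analog of Lemma \ref{1.1}, where the integration by parts picks up a different collection of boundary traces. First I would invoke the well-posedness theory of Cavalcante--Kwak \cite{MC}, which is stated for \emph{both} half-lines, to produce a solution operator $S^-(u_0,\mu,\nu,h,f)$ for the linear left half-line IBVP associated with \eqref{3}, together with the estimate analogous to \eqref{norma de u1}. Note that on $\mathbb{R}^-$ the fifth-order operator admits (and requires) \emph{three} boundary conditions $u(t,0)=\mu$, $u_x(t,0)=\nu$, $u_{xx}(t,0)=h$, which is the structural reason that $h$ enters both the hypotheses and the constant $c_1$.

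Next I would establish the left half-line analog of Lemma \ref{1.1}. Setting $q(t)=\int_{\mathbb{R}^-}u(t,x)\omega^-(x)\,dx$ and testing the linear system associated with \eqref{3} against $\psi\omega^-$, I integrate by parts on $[-R,0]$ and let $R\to\infty$. The $u_{xxxxx}$ term produces five boundary contributions at $x=0$, namely products of $\omega^-,\dots,\omega^{-(4)}$ evaluated at $0$ against $u,\dots,u_{xxxx}$; the two conditions $\omega^-(0)=\omega^{-\prime}(0)=0$ from \eqref{jota1} annihilate precisely the two \emph{unknown} highest-order traces $u_{xxx}(t,0)$ and $u_{xxxx}(t,0)$, leaving contributions $\omega^{-\prime\prime}(0)h(t)$, $\omega^{-\prime\prime\prime}(0)\nu(t)$ and $\omega^{-(4)}(0)\mu(t)$, together with a further boundary term involving $\beta\,\omega^{-\prime\prime}(0)\mu(t)$ coming from the lower-order dispersion $\beta u_{xxx}$. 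This yields a formula for $q'$ of the same shape as \eqref{q'}, and the bounds carry over verbatim, now producing an estimate for $\|q'\|_{L^p(0,T)}$ that in addition controls $\|h\|_{L^p(0,T)}$. The energy identity for the homogeneous system (the analog of \eqref{u e f}) is obtained in the same way by multiplying by $2u$ and integrating by parts, all boundary terms at $x=0$ vanishing under homogeneous data.

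With these two lemmas in place, the linear control statement (analog of Theorem \ref{caso linear}) and the contraction defining $\Gamma^-$ (analog of Lemma \ref{lss}) are proved exactly as before: I define $\Lambda^-=Q\circ S^-\circ G$ together with the associated map $A^-$, verify that $A^-$ is a contraction on $L^p(0,T)$ in the weighted norm $e^{-\gamma t}$ for $\gamma$ large, and conclude by Banach's theorem that $\Lambda^-$ is invertible with bounded inverse. Finally, to treat the full nonlinearity $uu_x$ I would run the fixed-point map $\Theta$ of \eqref{Theta} on a small ball of the corresponding space $Z(Q^-_T)$, using the bilinear estimate of \cite{MC} (valid on the left half-line as well) exactly as in the proof of Theorem \ref{main1}; the smallness condition $c_1\le\gamma$ then delivers the unique control $f_0$ and the unique solution $u$ satisfying \eqref{cond. de con.3}.

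I expect the main obstacle to be the boundary-term bookkeeping in the analog of Lemma \ref{1.1}: one must check that the orientation of $\mathbb{R}^-$ and the two vanishing conditions on $\omega^-$ are exactly compatible with the three prescribed traces, so that no unknown boundary datum survives and the count (three prescribed traces plus two vanishing conditions on $\omega^-$ equal five) is respected. Once this identity is pinned down, everything downstream is a faithful transcription of the right half-line argument; the real-line case (Theorem \ref{main3}) is even simpler, since there are no boundary terms at all and $\mathcal{J}$ imposes no conditions on $\omega$.
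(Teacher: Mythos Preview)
Your proposal is correct and follows exactly the approach the paper itself indicates: the paper does not give an explicit proof of Theorem \ref{main4} but merely states it in Section \ref{Sec4} as a direct adaptation of the right half-line argument, noting that ``the difference between the numbers of boundary conditions in \eqref{2a} and \eqref{3} is motivated by integral identities on smooth solutions to the linear Kawahara equation.'' Your bookkeeping of the boundary terms (three prescribed traces $\mu,\nu,h$ plus two vanishing conditions on $\omega^-$ matching the five boundary contributions of the fifth-order operator) makes this remark precise and is the only point that requires any care; everything else is, as you say, a faithful transcription.
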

   \item The difference between the numbers of boundary conditions in \eqref{2a} and \eqref{3} is motivated by integral identities on smooth solutions to the linear Kawahara equation
$$u_t+\alpha u_{x}+\beta u_{xxx}-u_{xxxxx}=0.$$
   \item Theorem \ref{main2} is also true for the systems \eqref{1} and \eqref{3}. Additionally,  due the results presented in \cite{MC1,MC} the functions involved in Theorems \ref{main3} and \ref{main4} are also sharp and we can introduce a more general nonlinearity like $u^2u_x$ in these systems.
       \vspace{0.1cm}
    \item Corollary \ref{teocontrol} may be extended for the system \eqref{1} taking into account the integral condition \eqref{cond. de con.1}. Also for the system \eqref{3}, with $u(t,0)=u_{x}(t,0)= u_{xx}(t,0)= 0$ and the integral condition \eqref{cond. de con.3}, this corollary is verified.
\end{itemize}

\subsection{General control result} Finally, we would like to comment about a more general control result.  Thanks to the Corollary \ref{teocontrol} it is possible to obtain an exact controllability property related to the mass of the system.  However, we would like to show the following exact controllability result:

\vspace{0.2cm}

\noindent\textbf{Exact control problem:} \textit{Given $u_0, u_T \in L^2(\R^+)$ and  $g \in C(0,T; L^{2}(\mathbb{R}^{+}))$ satisfying \eqref{damping},  can we find a control $f_0 \in L^p(0,T)$ such that the solution $u$ of \eqref{2_1} satisfies $u(T,x)=u_T(x)$?}

\vspace{0.2cm}

A possibility to give an answer for this question is to modify the overdetermination condition \eqref{cond. de con.2}.  For example, if Theorem \ref{main1}  is verified for the following integral condition
   \begin{equation}\label{newcondition}
       \widetilde\varphi(t)=\int_{\R^+}u^2(t,x)w(x)dx,
   \end{equation}
   we are able to get the exact controllability in $L^2(\R^+)$ with internal control  $f_0\in L^2(0,T)$ by using the same argument as in Corollary \ref{teocontrol}.  However, with the approach used in this manuscript, it is not clear that the Lemma \ref{lss} can be replied for the condition \eqref{newcondition}.

Indeed,  if we consider
   \begin{equation*}
       q(t)=\int_{\R^+}u^2(t,x)w(x)dx,
   \end{equation*}
analyzing $q'(t)$ for $u=S(0,0,0,f_0(t)g(t,x))$ (see Lemma \ref{q}) we obtain
   \begin{equation*}
   \begin{split}
       q'(t)=&\int_{\R^+} u^2(t,x)\left[ \alpha w'(x)+\beta w''(x) - 2w'''''(x)\right]dx \\
      & +\int_{\R^+} u^2_x(t,x)\left[5 w'''(x)-3\beta w'(x) - 2w'''''(x)\right]dx \\
       &- 5\int_{\R^+} u^2_{xx}(t,x) w'(x)dx +f_0(t) \int_{\R^+} g(t,x)u(t,x) w(x)dx.
       \end{split}
   \end{equation*}
   Now, introduce the operator $$\widetilde A : L^{p}(0,T) \longrightarrow L^{p}(0,T) $$ defined by $$
f_{0} \longmapsto \widetilde A(f_{0}) \in L^{p}(0,T),$$
where
   \begin{equation*}
   \begin{split}
(\widetilde Af_{0})(t)=& \varphi'(t) -\int_{\R^+} u^2(t,x)\left[ \alpha w'(x)+\beta w''(x) - 2w'''''(x)\right]dx \\
     &  -\int_{\R^+} u^2_x(t,x)\left[5 w'''(x)-3\beta w'(x) - 2w'''''(x)\right]dx        + 5\int_{\R^+} u^2_{xx}(t,x) w'(x)dx.
       \end{split}
   \end{equation*}
If we assume that $\Lambda (f_0)=\widetilde \varphi$, we deduce that 
\begin{equation*}
    (\widetilde Af_{0})(t) = f_0(t) \int_{\R^+} g(t,x)u(t,x) w(x)dx.
\end{equation*}
Note that this expression depends of solution, then we do not be able to obtain the overdetermination control condition for $S(0,0,0,f_0(t)g(t,x))$ by using a fixed point argument for the operator $$\left[\int_{\R^+} g(t,x)u(t,x) w(x)dx\right]^{-1}(\widetilde Af_{0})(t),$$
as in the proof of Lemma \ref{lss}. Therefore, the exact controllability with internal control does not holds.  Hence, the following open question arises: 

\vspace{0.2cm}
\noindent\textbf{Question:} \textit{Is it possible to prove Theorem \ref{main1} for the overdetermination condition \eqref{newcondition}?}
   
\subsection*{Acknowledgments:} 

Capistrano--Filho was supported by CNPq grant 408181/2018-4, CAPES-PRINT grant  88881.311964/2018-01, MATHAMSUD grants  88881.520205/2020-01, 21-MATH-03 and Propesqi (UFPE). Gallego was supported by MATHAMSUD 21-MATH-03 and the 100.000 Strong in the Americas Innovation Fund. de Sousa acknowledges support from CAPES-Brazil and CNPq-Brazil. This work is part of the PhD thesis of de Sousa at Department of Mathematics of the Universidade Federal de Pernambuco which was was supported by CNPq and Capes.

\subsection*{Data availability statement:} The data that support the findings of this study are available from the corresponding authorupon request.

\end{document}